 \newtheorem{thm}{Theorem}[section]
 \newtheorem{cor}[thm]{Corollary}
 \newtheorem{lem}[thm]{Lemma}
 \newtheorem{prop}[thm]{Proposition}
 \theoremstyle{definition}
 \newtheorem{defn}[thm]{Definition}
 \newtheorem{exmp}{Example}
 \newtheorem{rem}{Remark}
 \DeclareMathAlphabet{\mathsfsl}{OT1}{cmss}{m}{sl}
 \newcommand{\Enum}{\mathbb{E}}
 \newcommand{\Rnum}{\mathbb{R}}
 \newcommand{\Znum}{\mathbb{Z}}
 \newcommand{\Nnum}{\mathbb{N}}
  \newcommand{\diag}{\mathrm{diag}}
 \newcommand{\tensor}[1]{\mathsf{#1}}
 \newcommand{\set}[1]{\left\{#1\right\}}
\title{On the reversibility of the observed process of three-state hidden Markov model}
\author{\rm\small
\noindent CHEN Yong\\
\noindent \footnotesize School of Mathematics and Computing Science, Hunan
University of Science and Technology,\\
\noindent \footnotesize Xiangtan, Hunan, 411201,
P.R.China. chenyong77@gmail.com}
\date{}
\begin{document}
\maketitle
\maketitle \noindent {\bf Abstract } \\
For the continuous-time and the discrete-time three-state hidden
Markov model, the flux of the likelihood function up to
3-dimension of the observed process is shown explicitly. As an application, the sufficient and necessary condition of
the reversibility of the observed process is shown. \\
\noindent { \bf keywords: } hidden Markov models;
likelihood function(joint probability distribution); reversibility.\\
 \noindent { \bf MSC numbers: } 60J27; 60J99; 60K99

\maketitle

\section{Introduction}
There has been a large amount of literature published on the time
reversibility in probability, such as References
\cite{kol,hp,dsf,qqt,jdq,Ser,GJQ,wq}, which are
mainly about the Markov processes and the semi-Markov processes
(or Markov renewal processes). In the case of a Markov process
with finite state (discrete time or continuous time), Kolmogorov's
criterion for time reversibility is well-known. In the References~\cite{Wes,hlp,Cheng,TangZhang,TangC}, they
examined time reversibility in the context of a univariate
stationary linear time series (Gaussian or non-Gaussian) and of
multivariate linear processes.

In Reference\cite{mz}, for the hidden Markov model, it is shown
that the reversibility of the observed process is not equivalent
to that of the underlying Markov chain, i.e., if the underlying
Markov chain is reversible, then the observed process is
reversible too, however, if the Markov chain is irreversible, then
the observed process is either reversible or irreversible. In
Reference~\cite{BM}, the necessary and sufficient conditions for
reversibility of hidden Markov chains on general (countable)
spaces are obtained, however, the reversibility therein is
concerning the complete process, i.e., the bivariate stochastic
process containing both the underlying process and the observed
process. That is to say, the above two types of reversibility of
the hidden Makov model are different completely.

In the present paper, for continuous-time three-state Markov processes, we
calculate the flux of the likelihood function (joint probability
distribution) of  the observed process. As an application, the
sufficient and necessary condition of the reversibility of the
three-state hidden Markov model is shown (in the sense of
\cite{mz}, not in the sense of \cite{BM}). In fact, besides the
reversibility of the underlying Markov process, the reversibility
of the observed process is distinguished by whether the state-dependent probability matrix is regular (Definition~\ref{regular}, Theorem~\ref{dingl}). We illustrate by an example
that one cannot detect irreversibility in some cases by comparing
directional moments like that used in \cite[p104]{mz}.

We have also investigated the discrete-time three-state hidden Markov model. Since the method
is similar to the continuous-time case, we list the conclusions in Appendix (Section~4) and omit most of the proofs.
For the discrete-time case, the reversibility is also related to whether zero is an eigenvalue of the 1-step
transition probability matrix (Proposition~\ref{propzero}). Here we see a difference between discrete-time
and continuous-time hidden Markov model.

The reversibility of the hidden Markov model may be of interest in some biological studies. An approach to modelling the DNA
sequence is to use a hidden Markov model; see, for example,
Reference \cite{ch,de}. Since DNA sequences have directions, we
should rule out the reversible hidden Markov model.

\section{The flux of the likelihood function}\label{section2}
Let $\set{S_t: \,t\in \Rnum^+}$ be the observed process with state
space $\mathcal{S}=\set{0,1,2,\cdots,K-1}$.
\begin{defn}
The $n$-dimension likelihood function of $\set{S_t: \,t\in \Rnum^+}$ is defined
as $\Pr(S_{t_1}=s_1,\,S_{t_2}=s_2,\,\cdots,\,S_{t_n}=s_n)$, where
$n\in \Nnum$ and $0\leqslant t_1 \leqslant t_2 \leqslant
\cdots\leqslant t_n$.
The flux of the likelihood function of $\set{S_t: \,t\in \Rnum^+}$ is defined
as
\begin{equation}
 \hspace{-20mm} \Pr(S_{t_1}=s_1,\,S_{t_2}=s_2,\,\cdots,\,S_{t_n}=s_n)-\Pr(S_{t_1^-}=s_1,\,S_{t_2^-}=s_2,\,\cdots,\,S_{t_n^-}=s_n),
\end{equation}
where $t_k^-=t_1+t_n-t_k$.
\end{defn}

Let $\set{C_t: \,t\in \Rnum^+}$ be an irreducible three-state
Markov process with the transition rate matrix $\tensor{Q}$, and
the stationary distribution $\mu=(\mu_1,\mu_2,\mu_3)$, where
$\mu_1+\mu_2+\mu_3=1,\,\mu_i>0$.

\begin{equation}\label{matrix original}
\tensor{Q}= \left[
\begin{array}{lll}
     -a_1&a_2 &a_3\\
     b_1&-b_2&b_3\\
     c_1&c_2 &-c_3
\end{array}
\right ],
\end{equation}
where
$a_1=a_2+a_3,\,b_2=b_1+b_3,\,c_3=c_1+c_2,\,a_i,\,b_i,\,c_i\geqslant
0,\,i=1,2,3$, and $a_1b_2c_3,\,b_1+c_1,\,a_2+c_2,\,a_3+b_3>0$
(i.e. irreducible). By the stationarity, it is clear that the
transition rate flux of $\set{C_t: \,t\in \Rnum^+}$ is
\begin{equation}\label{flux}
  \mu_1a_2-\mu_2b_1=\mu_2 b_3 - \mu_3 c_2 = \mu_3 c_1 - \mu_1 a_3.
\end{equation}
Let $\nu=\mu_1a_2-\mu_2b_1$. And the eigen-equation of $\tensor{Q}$ is
\begin{equation}\label{eigenequ}
    \lambda(\lambda^2 + \alpha \lambda +\beta)=0.
\end{equation}
Denote by $-\lambda_1,\,-\lambda_2$ the nonzero eigenvalues of
$\tensor{Q}$. Let $\triangle =\alpha^2 - 4\beta$.

Similar to the
denotation of Reference~\cite{mrh}, let $S_1^j$ and $T_1^j$ denote
the sequence from $1$ to $j$  of the observed states and
observation times. The Markov assumption for the hidden process is
given by
\begin{equation}
 \begin{array}{ll}
 \Pr[C(t_j)\,|\,C(t_1),C(t_2),\dots,C(t_{j-1}),S_1^{j-1},T_1^j=t_1^j]\\
 =\Pr[C(t_j)\,|\,C(t_{j-1}),T_{j-1}^j=t_{j-1}^j]\\
 =\tensor{P}_{c_{j-1},c_j}(t_j-t_{j-1}),
 \end{array}
\end{equation}
where the quantity $\tensor{P}_{c_{j-1},c_j}(t_j-t_{j-1})$ denotes
the transition probability of occupying sate $c_j$ at time
$T_j=t_j$ given that the process was in state $c_{j-1}$ at
$t_{j-1}$. As indicated by the last equality, the transition
probabilities of the process are assumed to be time homogeneous.
We also assume that, conditional on the state of the hidden
process at time $t_j$, an observation $S_j$ is independent of all
previous observations and the hidden process prior to time $t_j$:
\begin{equation}
 \begin{array}{ll}
  \Pr[S_j\,|\,C(t_1),C(t_2),\dots,C(t_{j}),S_1^{j-1},T_1^j=t_1^j]\\
=\Pr[S_j\,|\,C(t_{j}),T_j=t_j]\\
=\pi({s_j\,|\,c_j}).
\end{array}
\end{equation}

Let the `state-dependent probability' (i.e., emission probability,
signal probability) matrix be $\tensor{\Pi}=(\pi({k\,|\,i})),\,
i=1,2,3;\,k=0,1,2,\dots,K-1$ (i.e., a $3\times K$ matrix). Note
that the rows of $\tensor{\Pi}$ must sum to 1 \footnote{The
state-dependent probability here is the transpose matrix of that
in Reference \cite{mz}.}.
Let $\varphi_{k}$ be the $k$-column of $\tensor{\Pi}$ and
$\tensor{\wedge}_k=\diag\set{\varphi_k},\,k=0,1,2,\dots,K-1$.
\begin{prop}\label{prop1}
The flux of the 2-dimension likelihood function is when
  $t>0$,
\begin{eqnarray*}
 \Pr\set{S_0=i,S_t=j}-\Pr\set{S_0=j,S_t=i}=\frac{\nu A}{\lambda_1-\lambda_2}[e^{-\lambda_2 t}-e^{-\lambda_1 t}],
\end{eqnarray*}
where $\nu$ is the transition rate flux,
$A=(y_2-x_2)(x_1-z_1)-(x_2-z_2)(y_1-x_1)$,
$(x_1,y_1,z_1)'=\varphi_i$, and $ (x_2,y_2,z_2)'=\varphi_j$.
\end{prop}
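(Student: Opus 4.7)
The plan is to express the two-dimensional likelihoods as matrix sandwiches, apply the spectral decomposition of $e^{\tensor{Q}t}$, and reduce the flux to an algebraic identity about $\tensor{Q}$ tested against the columns of $\tensor{\Pi}$.

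First I would write
\[
 \Pr\set{S_0=i,\,S_t=j}=\mu\,\tensor{\wedge}_i\,e^{\tensor{Q}t}\,\varphi_j,
\]
and analogously with $i,j$ swapped. Since $\tensor{Q}$ has eigenvalues $0,-\lambda_1,-\lambda_2$, Sylvester's formula gives $e^{\tensor{Q}t}=\tensor{E}_0+e^{-\lambda_1 t}\tensor{E}_1+e^{-\lambda_2 t}\tensor{E}_2$, with $\tensor{E}_0=\mathbf{1}\mu$, $\tensor{E}_1=[\tensor{Q}^2+\lambda_2\tensor{Q}]/[\lambda_1(\lambda_1-\lambda_2)]$, and $\tensor{E}_2=-[\tensor{Q}^2+\lambda_1\tensor{Q}]/[\lambda_2(\lambda_1-\lambda_2)]$. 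Because $\mu\tensor{\wedge}_i\tensor{E}_0\varphi_j=\Pr(S_0=i)\Pr(S_0=j)$ is symmetric in $i,j$, the $\tensor{E}_0$ part cancels in the flux, and everything reduces to computing
\[
 D_1=\mu\tensor{\wedge}_i\tensor{Q}\varphi_j-\mu\tensor{\wedge}_j\tensor{Q}\varphi_i,\qquad D_2=\mu\tensor{\wedge}_i\tensor{Q}^2\varphi_j-\mu\tensor{\wedge}_j\tensor{Q}^2\varphi_i.
\]

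The heart of the argument is the identity $D_1=\nu A$. I would expand $\mu\tensor{\wedge}_i\tensor{Q}\varphi_j$ into nine terms, one per entry of $\tensor{Q}$, and group them by the three edges $(1,2),(2,3),(1,3)$ of the state graph. After the antisymmetric subtraction, the contribution of each edge collapses to a single $2\times 2$ minor---$x_1y_2-x_2y_1$, $y_1z_2-y_2z_1$, or $x_1z_2-x_2z_1$---multiplied respectively by $\mu_1a_2-\mu_2b_1$, $\mu_2b_3-\mu_3c_2$, or $\mu_3c_1-\mu_1a_3$. By the flux balance~\eqref{flux} all three rate combinations equal $\nu$, and the three signed minors assemble into precisely the expression defining $A$. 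This edge-by-edge bookkeeping is the main technical obstacle; it is elementary algebra, but it is where the cyclic equality of directional fluxes enters in a crucial way.

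For $D_2$ I would avoid repeating such a calculation by exploiting the Cayley--Hamilton-type identity $\tensor{Q}^2=-\alpha\tensor{Q}-\beta\tensor{I}+\beta\tensor{E}_0$, which follows from~\eqref{eigenequ} together with the fact that $\tensor{Q}^2+\alpha\tensor{Q}+\beta\tensor{I}$ vanishes on the two nonzero eigenspaces and equals $\beta\tensor{I}$ on the null one. Substituting and noting that both $\mu\tensor{\wedge}_i\varphi_j=\sum_r\mu_r\pi(i|r)\pi(j|r)$ and $\mu\tensor{\wedge}_i\tensor{E}_0\varphi_j$ are manifestly symmetric in $i,j$ yields $D_2=-\alpha D_1=-\alpha\nu A$ for free. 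Assembling the pieces, the coefficients of $e^{-\lambda_1 t}$ and $e^{-\lambda_2 t}$ in the flux are $(D_2+\lambda_2 D_1)/[\lambda_1(\lambda_1-\lambda_2)]$ and $-(D_2+\lambda_1 D_1)/[\lambda_2(\lambda_1-\lambda_2)]$; using $\alpha=\lambda_1+\lambda_2$ these simplify to $-\nu A/(\lambda_1-\lambda_2)$ and $\nu A/(\lambda_1-\lambda_2)$, matching the claimed formula.
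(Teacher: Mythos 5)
Your proof is correct and reaches the same reduction as the paper, but via a different decomposition of the semigroup. The paper writes the likelihood as $\mu\tensor{\wedge}_i\tensor{P}(t)\tensor{\wedge}_j\vec{e}$ and invokes Lemma~\ref{probability}, $\tensor{P}(t)=g_t\tensor{L}+d_t\tensor{Q}+f_t\tensor{I}$; since the $\tensor{L}$- and $\tensor{I}$-terms are manifestly symmetric in $i,j$, the flux collapses in one line to $d_t\bigl(\mu\tensor{\wedge}_i\tensor{Q}\tensor{\wedge}_j\vec{e}-\mu\tensor{\wedge}_j\tensor{Q}\tensor{\wedge}_i\vec{e}\bigr)$ with $d_t=(e^{-\lambda_2 t}-e^{-\lambda_1 t})/(\lambda_1-\lambda_2)$ already the desired prefactor, and no second-order quantity ever appears. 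Your Sylvester decomposition into spectral projectors instead brings in $\tensor{Q}^2$, so you need the additional Cayley--Hamilton identity $\tensor{Q}^2+\alpha\tensor{Q}+\beta\tensor{I}=\beta\tensor{E}_0$ to get $D_2=-\alpha D_1$ and then a recombination of coefficients using $\alpha=\lambda_1+\lambda_2$; I checked these steps and they are all valid (for $\triangle\neq 0$, i.e.\ distinct nonzero eigenvalues --- the degenerate case is handled, as in the paper, by the limit of Remark~\ref{remark}), but they are extra work that the paper's choice of basis $\set{\tensor{L},\tensor{Q},\tensor{I}}$ avoids. Your edge-by-edge verification that $D_1=\nu A$, where the cyclic flux balance~(\ref{flux}) enters, is exactly the content of the paper's skew-symmetry identity $\tensor{UQ}-\tensor{Q}'\tensor{U}=\nu\tensor{K}$ in Eq.~(\ref{matrix1}); packaging it as a single $3\times 3$ matrix identity is worth adopting, since the paper reuses it verbatim in the proof of Theorem~\ref{3dim likely}.
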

\begin{cor}\label{cor2}
If the rank of the state-dependent probability is $1$ or $2$, then
$\Pr\set{S_0=i,S_t=j}-\Pr\set{S_0=j,S_t=i}=0$.
\end{cor}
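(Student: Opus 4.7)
The plan is to recognise $A$ as a single $3\times 3$ determinant and then argue that the row-sum-one condition on $\tensor{\Pi}$ forces this determinant to vanish as soon as the rank of $\tensor{\Pi}$ drops below $3$.

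First I would verify the algebraic identity
\[
A=(y_2-x_2)(x_1-z_1)-(x_2-z_2)(y_1-x_1)=\det\bigl[\,\mathbf{1},\,\varphi_i,\,\varphi_j\,\bigr],
\]
where $\mathbf{1}=(1,1,1)'$ and the three vectors on the right are the columns of the matrix. Both sides expand to $x_1(y_2-z_2)+y_1(z_2-x_2)+z_1(x_2-y_2)$, so this step is a routine one-line check.

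Second, the hypothesis that each row of $\tensor{\Pi}$ sums to $1$ rewrites as $\sum_{k=0}^{K-1}\varphi_k=\mathbf{1}$, so $\mathbf{1}$ always lies in the column span of $\tensor{\Pi}$. If $\mathrm{rank}\,\tensor{\Pi}\leqslant 2$, that column span has dimension at most $2$, and consequently the three vectors $\mathbf{1},\,\varphi_i,\,\varphi_j$, all of which lie inside it, are linearly dependent. Therefore $A=\det[\mathbf{1},\varphi_i,\varphi_j]=0$, and substituting into the formula of Proposition~\ref{prop1} yields the claimed identity $\Pr\set{S_0=i,S_t=j}-\Pr\set{S_0=j,S_t=i}=0$.

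The only non-routine step is recognising the determinantal form of $A$; once this is in place, the rest of the argument is elementary linear algebra and uses crucially only the row-sum condition, not any structural property of the underlying Markov process or of the rate matrix $\tensor{Q}$. I do not foresee any further difficulty, and in particular the rank-$1$ and rank-$2$ cases are handled uniformly by the same argument.
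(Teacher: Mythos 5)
Your proposal is correct and follows essentially the same route as the paper: identify $A$ as $\det[\vec{e},\varphi_i,\varphi_j]$ and use $\vec{e}=\sum_{k}\varphi_k$ to force linear dependence when the rank is at most $2$. The only difference is cosmetic — the paper splits into the rank-$1$ and rank-$2$ cases, while you handle both uniformly by observing that all three vectors lie in the column span of $\tensor{\Pi}$, which is a slightly cleaner packaging of the same argument.
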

\begin{thm}\label{3dim likely}
The flux of the following 3-dimension likelihood function is when
  $r,\,t>0$,
\begin{eqnarray*}
 & \Pr\set{S_0=S_r=S_{r+t}=i}- \Pr\set{S_0=S_t=S_{t+r}=i}\\
 &=\frac{\nu D}{\lambda_1-\lambda_2}(e^{-\lambda_2 r-\lambda_1 t}-e^{-\lambda_2 t-\lambda_1 r}),
\end{eqnarray*}
where $\nu$ is the transition rate flux, $D=(x-y)(y-z)(z-x)$,
$(x,y,z)'=\varphi_i$.
\end{thm}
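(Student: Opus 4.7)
The plan is to cast the two 3-dimensional likelihoods in matrix form and exploit the spectral decomposition of the transition semigroup. Using the Markov and emission assumptions together with stationarity, one has
\[
\Pr(S_0=S_r=S_{r+t}=i)=\mu\,\tensor{\wedge}_i\,\tensor{P}(r)\,\tensor{\wedge}_i\,\tensor{P}(t)\,\tensor{\wedge}_i\,\mathbf{1},
\]
with $\mu$ the stationary row vector, $\tensor{P}(t)=e^{t\tensor{Q}}$, and $\mathbf{1}$ the all-ones column. Interchanging $r$ and $t$ gives the reversed-time probability, so the 3-dimensional flux equals $\mu\,\tensor{\wedge}_i\,\bigl(\tensor{P}(r)\tensor{\wedge}_i\tensor{P}(t)-\tensor{P}(t)\tensor{\wedge}_i\tensor{P}(r)\bigr)\,\tensor{\wedge}_i\,\mathbf{1}$.

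Next I would substitute the spectral decomposition $\tensor{P}(t)=\tensor{E}_0+e^{-\lambda_1 t}\tensor{E}_1+e^{-\lambda_2 t}\tensor{E}_2$, where $\tensor{E}_0=\mathbf{1}\mu$ is the rank-one stationary projection. This produces nine exponentials $e^{-\lambda_j r-\lambda_k t}$ (with $\lambda_0:=0$), each carrying an antisymmetrized sandwich as coefficient. Two structural cancellations reduce this to a two-term formula: the diagonal $j=k$ contributions cancel by symmetry, and because $\tensor{E}_0=\mathbf{1}\mu$ is rank-one, every sandwich containing $\tensor{E}_0$ splits into a product of three scalars and is annihilated by antisymmetrization -- the same mechanism behind the vanishing of the constant part in Proposition~\ref{prop1}. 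Only the pairs $(j,k)=(1,2)$ and $(2,1)$ survive, yielding
\[
\bigl(e^{-\lambda_2 r-\lambda_1 t}-e^{-\lambda_1 r-\lambda_2 t}\bigr)\bigl[\mu\tensor{\wedge}_i\tensor{E}_2\tensor{\wedge}_i\tensor{E}_1\tensor{\wedge}_i\mathbf{1}-\mu\tensor{\wedge}_i\tensor{E}_1\tensor{\wedge}_i\tensor{E}_2\tensor{\wedge}_i\mathbf{1}\bigr],
\]
which already displays the exponential factor of the theorem.

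From the two identities $\tensor{E}_0+\tensor{E}_1+\tensor{E}_2=\tensor{I}$ and $-\lambda_1\tensor{E}_1-\lambda_2\tensor{E}_2=\tensor{Q}$, I would solve $\tensor{E}_1=(\tensor{Q}+\lambda_2(\tensor{I}-\tensor{E}_0))/(\lambda_2-\lambda_1)$ and symmetrically for $\tensor{E}_2$. Substituting into the bracket, the $\tensor{Q}\tensor{\wedge}_i\tensor{Q}$ and $(\tensor{I}-\tensor{E}_0)\tensor{\wedge}_i(\tensor{I}-\tensor{E}_0)$ blocks appear identically in both orderings and cancel; the remaining cross terms, after absorbing the scalar $\mu\tensor{\wedge}_i\mathbf{1}$ arising from $\tensor{E}_0=\mathbf{1}\mu$, collapse to
\[
\frac{1}{\lambda_1-\lambda_2}\bigl(\mu\,\tensor{\wedge}_i\,\tensor{Q}\,\tensor{\wedge}_i^{\,2}\,\mathbf{1}-\mu\,\tensor{\wedge}_i^{\,2}\,\tensor{Q}\,\tensor{\wedge}_i\,\mathbf{1}\bigr).
\]

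The last and principal obstacle is to identify the numerator with $\nu D$. Writing $\varphi_i=(x,y,z)^{\top}$, it expands as the double sum $\sum_{j\neq k}\mu_j\,\tensor{Q}_{jk}\,\varphi_j\varphi_k(\varphi_k-\varphi_j)$; grouping by unordered pair $\{j,k\}$ and invoking the transition-rate flux identity~\eqref{flux} factors out the common $\nu$ and leaves $x^2(z-y)+y^2(x-z)+z^2(y-x)$, which equals the Vandermonde $(x-y)(y-z)(z-x)=D$. The earlier cancellations are short and conceptual; the tight book-keeping in this final symmetric-function identification is where the real work is concentrated, but once carried out the claimed formula is immediate.
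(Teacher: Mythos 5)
Your proof is correct and is essentially the paper's own argument in a different basis: the paper substitutes Lemma~\ref{probability} ($\tensor{P}(t)=g_t\tensor{L}+d_t\tensor{Q}+f_t\tensor{I}$) directly and reads off the coefficient $d_rf_t-d_tf_r=\frac{1}{\lambda_1-\lambda_2}(e^{-\lambda_2 r-\lambda_1 t}-e^{-\lambda_2 t-\lambda_1 r})$ of the one surviving bracket, whereas you pass through the eigenprojections $\tensor{E}_0,\tensor{E}_1,\tensor{E}_2$ and then convert back to the $\set{\tensor{Q},\tensor{I},\tensor{L}}$ basis. The two cancellation mechanisms (the rank-one stationary projection $\tensor{L}=\vec{e}\mu$ killing every mixed sandwich that contains it, and the flux identity~(\ref{flux}), equivalently $\tensor{UQ}-\tensor{Q}'\tensor{U}=\nu\times$ an antisymmetric matrix, identifying $\mu\tensor{\wedge}_i\tensor{Q}\tensor{\wedge}_i^2\vec{e}-\mu\tensor{\wedge}_i^2\tensor{Q}\tensor{\wedge}_i\vec{e}$ with $\nu D$) are exactly those of the paper.
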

Proofs of Proposition~\ref{prop1}, Corollary~\ref{cor2}, Theorem~\ref{3dim likely} are presented in Subsection~\ref{proofs0}.

\subsection{Proofs }\label{proofs0}
Let
$\tensor{U}=\diag\set{\mu_1,\mu_2,\mu_3}$. Then
\begin{equation}\label{matrix1}
\tensor{U Q}-\tensor{Q}'\tensor{U}= \nu\left[
\begin{array}{lll}
     0&1 &-1\\
     -1&0&1\\
     1&-1 &0
\end{array}
\right ]
\end{equation}
Let $\vec{e}=(1,1,1)'$ and the matrix $\tensor{L}=\vec{e}\mu$.
\begin{lem}\label{probability}
If $\triangle \neq 0$, then for $t\in \Rnum^+ $, the $t$-step
transition probability matrix is
\begin{equation}
  \tensor{P}(t)=g_t\tensor{L}+d_t\tensor{Q}+f_t\tensor{I},
\end{equation}
where
\begin{equation}\label{dk1}
\begin{array}{ll}
d_t  = \frac{e^{-\lambda_2 t}-e^{-\lambda_1 t}}{\lambda_1-\lambda_2},\\
f_t =\frac
{\lambda_1e^{-\lambda_2 t}-\lambda_2e^{-\lambda_1 t}}{\lambda_1-\lambda_2},\\
g_t  = 1-f_t.
\end{array}
\end{equation}
\end{lem}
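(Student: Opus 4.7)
The plan is to reduce $\tensor{P}(t) = e^{t\tensor{Q}}$ to the three-term form by combining the Cayley--Hamilton identity for $\tensor{Q}$ with the Kolmogorov forward equation. The hypothesis $\triangle \ne 0$ ensures that the three eigenvalues $0,-\lambda_1,-\lambda_2$ of $\tensor{Q}$ are distinct; this is essential both for the linear independence of $\tensor{L},\tensor{Q},\tensor{I}$ in the space of $3\times 3$ matrices and for the two-exponential form of the answer.

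First I would record the algebraic properties of $\tensor{L} = \vec{e}\mu$: since $\mu\vec{e} = 1$ one has $\tensor{L}^2 = \tensor{L}$, and since $\tensor{Q}\vec{e} = 0$ together with the stationarity relation $\mu\tensor{Q} = 0$, both $\tensor{Q}\tensor{L}$ and $\tensor{L}\tensor{Q}$ vanish. Next I would apply Cayley--Hamilton to the characteristic polynomial $\lambda(\lambda^2+\alpha\lambda+\beta)$, obtaining $\tensor{Q}\bigl(\tensor{Q}^2+\alpha\tensor{Q}+\beta\tensor{I}\bigr)=0$. Writing $M := \tensor{Q}^2+\alpha\tensor{Q}+\beta\tensor{I}$, each column of $M$ lies in $\ker\tensor{Q} = \mathrm{span}\{\vec{e}\}$, so $M = \vec{e}\,w$ for some row vector $w$. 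Left-multiplying by $\mu$ and using $\mu\tensor{Q} = 0$ gives $w = \mu M = \beta\mu$, hence
\begin{equation*}
 \tensor{Q}^2 = \beta\tensor{L}-\alpha\tensor{Q}-\beta\tensor{I}.
\end{equation*}

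With this reduction in hand I would substitute the ansatz $\tensor{P}(t) = g_t\tensor{L}+d_t\tensor{Q}+f_t\tensor{I}$ into $\tensor{P}'(t) = \tensor{Q}\tensor{P}(t)$. Using $\tensor{Q}\tensor{L} = 0$ and the displayed formula for $\tensor{Q}^2$, matching coefficients of the (linearly independent) matrices $\tensor{L},\tensor{Q},\tensor{I}$ yields the coupled system
\begin{equation*}
 g_t' = \beta d_t,\qquad d_t' = f_t-\alpha d_t,\qquad f_t' = -\beta d_t,
\end{equation*}
with initial data $g_0 = d_0 = 0$ and $f_0 = 1$ coming from $\tensor{P}(0) = \tensor{I}$. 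Eliminating $f_t$ produces the scalar ODE $d_t''+\alpha d_t'+\beta d_t = 0$ whose characteristic equation is exactly \eqref{eigenequ}, with distinct roots $-\lambda_1,-\lambda_2$. Solving with $d_0 = 0$ and $d_0' = f_0 - \alpha d_0 = 1$ gives the stated closed form for $d_t$; then $f_t = d_t'+\alpha d_t$ produces the formula for $f_t$, and adding the first and third equations gives $(g_t+f_t)' = 0$, so $g_t = 1-f_t$.

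The only non-routine step is the Cayley--Hamilton reduction of $\tensor{Q}^2$ to the span of $\{\tensor{L},\tensor{Q},\tensor{I}\}$, in particular pinning down the scalar $c = \beta$; everything downstream is a standard constant-coefficient linear ODE calculation whose two-exponential form hinges on $\lambda_1 \ne \lambda_2$, i.e.\ on $\triangle \ne 0$. As a sanity check, the resulting matrix is automatically stochastic on the right, since $\tensor{P}(t)\vec{e} = g_t\vec{e}+d_t\cdot 0+f_t\vec{e} = (g_t+f_t)\vec{e} = \vec{e}$.
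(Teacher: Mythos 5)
Your argument is correct, and it is genuinely different from what the paper does: the paper supplies no proof at all for Lemma~\ref{probability}, merely citing Proposition~4.3 of \cite{cy} and Theorem~14.9 of \cite{CCL}, so your derivation is a self-contained replacement rather than a variant of an in-text argument. The key step checks out: by Cayley--Hamilton, $\tensor{Q}(\tensor{Q}^2+\alpha\tensor{Q}+\beta\tensor{I})=0$, and since irreducibility forces $\ker\tensor{Q}=\mathrm{span}\{\vec{e}\}$, the columns of $\tensor{M}=\tensor{Q}^2+\alpha\tensor{Q}+\beta\tensor{I}$ are multiples of $\vec{e}$; left-multiplying by $\mu$ and using $\mu\tensor{Q}=0$ correctly pins down $\tensor{M}=\beta\tensor{L}$. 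The resulting system $g_t'=\beta d_t$, $d_t'=f_t-\alpha d_t$, $f_t'=-\beta d_t$ with $g_0=d_0=0$, $f_0=1$ does yield the stated $d_t$, $f_t$ and $g_t=1-f_t$ (I verified the constants: $d_0'=1$ gives $d_t=(e^{-\lambda_2 t}-e^{-\lambda_1 t})/(\lambda_1-\lambda_2)$, and $f_t=d_t'+\alpha d_t$ with $\alpha=\lambda_1+\lambda_2$ collapses to the claimed expression). Two small remarks: linear independence of $\tensor{L},\tensor{Q},\tensor{I}$ is not actually needed, since it suffices that the coefficient system is \emph{sufficient} for the ansatz to solve $\tensor{P}'=\tensor{Q}\tensor{P}$, $\tensor{P}(0)=\tensor{I}$, and then uniqueness of the matrix ODE solution finishes the job; and your closing observation that the construction automatically gives $\tensor{P}(t)\vec{e}=\vec{e}$ is a nice sanity check that the cited references do not make explicit. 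What your route buys is that the paper becomes self-contained on the one structural fact (``$\tensor{P}(t)$ written as a function with matrix coefficients'') that the author identifies as the key to all subsequent results, and it makes transparent why Remark~\ref{remark}'s limiting case $\lambda_2\to\lambda_1$ produces Eq.~(\ref{dk2}): the same second-order ODE with a double root.
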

It is Proposition~4.3 of Reference~\cite{cy}. The reader can also
refer to Theorem~14.9 of Reference~\cite{CCL}. To write it in
terms of function with matrix coefficients is the key to the
results in the present paper.

\begin{rem}\label{remark}
 Fix the value of $\lambda_1$, and let $\lambda_2\rightarrow
 \lambda_1$, then one has
\begin{equation}\label{dk2}
\begin{array}{ll}
d_t  = t\, e^{-\lambda_1 t},\\
f_t =(1+\lambda_1 t)e^{-\lambda_1 t},\\
g_t = 1-f_t.
\end{array}
\end{equation}
It is exactly the $t$-step transition probability matrix in the
case $\triangle = 0$, please refer to \cite{cy,CCL} and the
references therein. That is to say, Eq.(\ref{dk2}) is the same as
Eq.(\ref{dk1}) in the sense of limit. We therefore do not
distinguish whether $\triangle = 0$ or not for all the subsequent
formulas.
\end{rem}
\noindent{\it Proof of Proposition~\ref{prop1}.\,}
Since $\tensor{L}=\vec{e}\mu$, we have
  $$\mu\tensor{\wedge}_i\tensor{L}\tensor{\wedge}_j\vec{e}=(\mu\tensor{\wedge}_i\vec{e})(\mu\tensor{\wedge}_j\vec{e})
  =(\mu\tensor{\wedge}_j\vec{e})(\mu\tensor{\wedge}_i\vec{e})=\mu\tensor{\wedge}_j\tensor{L}\tensor{\wedge}_i\vec{e}.$$
Note that
$\tensor{\wedge}_i\tensor{\wedge}_j=\tensor{\wedge}_j\tensor{\wedge}_i$.
  By Eq.(2.27) of Reference~\cite{mz} and Lemma~\ref{probability}, we have
\begin{eqnarray*}
  && \Pr\set{S_0=i,S_t=j}-\Pr\set{S_0=j,S_t=i}\\
  &=&\mu\tensor{\wedge}_i\tensor{P}(t)\tensor{\wedge}_j\vec{e}-\mu\tensor{\wedge}_j\tensor{P}(t)\tensor{\wedge}_i\vec{e}\\
  &=&\mu\tensor{\wedge}_i(g_t\tensor{L}+d_t\tensor{Q}+f_t\tensor{I})\tensor{\wedge}_j\vec{e}-\mu\tensor{\wedge}_j(g_t\tensor{L}+d_t\tensor{Q}+f_t\tensor{I})\tensor{\wedge}_i\vec{e}\\
  &=&g_t(\mu\tensor{\wedge}_i\tensor{L}\tensor{\wedge}_j\vec{e}-\mu\tensor{\wedge}_j\tensor{L}\tensor{\wedge}_i\vec{e})
  +d_t(\mu\tensor{\wedge}_i\tensor{Q}\tensor{\wedge}_j\vec{e}-\mu\tensor{\wedge}_j\tensor{Q}\tensor{\wedge}_i\vec{e})
  +f_t(\mu\tensor{\wedge}_i\tensor{\wedge}_j\vec{e}-\mu\tensor{\wedge}_j\tensor{\wedge}_i\vec{e})\\
  &=&d_t(\mu\tensor{\wedge}_i\tensor{Q}\tensor{\wedge}_j\vec{e}-\mu\tensor{\wedge}_j\tensor{Q}\tensor{\wedge}_i\vec{e}).
\end{eqnarray*}
Since
$\mu\tensor{\wedge}_i\tensor{Q}\tensor{\wedge}_j\vec{e}=\varphi_i'\tensor{U
Q}\varphi_j$, we have
$$\mu\tensor{\wedge}_j\tensor{Q}\tensor{\wedge}_i\vec{e}=\varphi_j'\tensor{U
Q}\varphi_i=(\varphi_j'\tensor{U Q}\varphi_i)'=\varphi_i'\tensor{
Q}'\tensor{U}\varphi_j.$$ By Eq.(\ref{matrix1}), we have
\begin{equation}\label{equation6}
\mu\tensor{\wedge}_i\tensor{Q}\tensor{\wedge}_j\vec{e}-\mu\tensor{\wedge}_j\tensor{Q}\tensor{\wedge}_i\vec{e}
  =\varphi_i'(\tensor{U Q}-\tensor{Q}'\tensor{U})\varphi_j
  = \nu A.
\end{equation}
This ends the proof.\hfill $\Box$
\\
\\
\noindent{\it Proof of Corollary \ref{cor2}.\,}
Note that
$A=\det\set{\tensor{H}}$, where ``det" is the determinant
function, and $\tensor{H}=[\vec{e},\,\varphi_i,\,\varphi_j]$ is a $3\times 3$ matrix.
If the rank
of the state-dependent probability is $1$, then $\varphi_i,\,\varphi_j$ are linear dependent and we obtain that $A=0$.
If
the rank of the state-dependent probability is $2$, and if
$\varphi_i,\,\varphi_j$ are linear independent, then they are one
base of $\set{\varphi_k, k=0,1,2,\dots,K-1}$. Note that
$\vec{e}=\sum_{k=0}^{K-1}\varphi_i$. Then
$\varphi_i,\,\varphi_j,\,\vec{e}$ are linear dependent. Thus
$A=0$. This ends the proof.\hfill $\Box$
\\
\\
\noindent{\it Proof of Theorem \ref{3dim likely}.\,}
Since $\tensor{L}=\vec{e}\mu$, we have
\begin{eqnarray*}
\mu\tensor{\wedge}_i\tensor{L}\tensor{\wedge}_i\tensor{Q}\tensor{\wedge}_i\vec{e}
  =(\mu\tensor{\wedge}_i\vec{e})(\mu\tensor{\wedge}_i\tensor{Q}\tensor{\wedge}_i\vec{e})
  =(\mu\tensor{\wedge}_i\tensor{Q}\tensor{\wedge}_i\vec{e})(\mu\tensor{\wedge}_i\vec{e})
  =\mu\tensor{\wedge}_i\tensor{Q}\tensor{\wedge}_i\tensor{L}\tensor{\wedge}_i\vec{e}.
\end{eqnarray*}
Similar to Proposition~\ref{prop1}, we obtain
\begin{eqnarray}
   & & \Pr\set{S_0=S_r=S_{r+t}=i}- \Pr\set{S_0=S_t=S_{t+r}=i}\nonumber\\
  &=&\mu\tensor{\wedge}_i\tensor{P}(r)\tensor{\wedge}_i\tensor{P}(t)\tensor{\wedge}_i\vec{e}-\mu\tensor{\wedge}_i\tensor{P}(t)\tensor{\wedge}_i\tensor{P}(r)\tensor{\wedge}_i\vec{e} \nonumber\\
  &=&\mu\tensor{\wedge}_i(g_r\tensor{L}+d_r\tensor{Q}+f_r\tensor{I})\tensor{\wedge}_i(g_t\tensor{L}+d_t\tensor{Q}+f_t\tensor{I})\tensor{\wedge}_i\vec{e}\nonumber\\
  &-&\mu\tensor{\wedge}_i(g_t\tensor{L}+d_t\tensor{Q}+f_t\tensor{I})\tensor{\wedge}_i(g_r\tensor{L}+d_r\tensor{Q}+f_r\tensor{I})\tensor{\wedge}_i\vec{e}\nonumber\\
  &=&(g_rd_t-g_td_r)(\mu\tensor{\wedge}_i\tensor{L}\tensor{\wedge}_i\tensor{Q}\tensor{\wedge}_i\vec{e}-\mu\tensor{\wedge}_i\tensor{Q}\tensor{\wedge}_i\tensor{L}\tensor{\wedge}_i\vec{e})
  +(g_rf_t-g_tf_r)(\mu\tensor{\wedge}_i\tensor{L}\tensor{\wedge}_i^2\vec{e}-\mu\tensor{\wedge}_i^2\tensor{L}\tensor{\wedge}_i\vec{e})\nonumber\\
  &+&(d_rf_t-d_tf_r)(\mu\tensor{\wedge}_i\tensor{Q}\tensor{\wedge}_i^2\vec{e}-\mu\tensor{\wedge}_i^2\tensor{Q}\tensor{\wedge}_i\vec{e})\nonumber\\
  &=&(d_rf_t-d_tf_r)(\mu\tensor{\wedge}_i\tensor{Q}\tensor{\wedge}_i^2\vec{e}-\mu\tensor{\wedge}_i^2\tensor{Q}\tensor{\wedge}_i\vec{e}).\nonumber
\end{eqnarray}
It follows from Lemma~\ref{probability} that
\begin{eqnarray*}
  d_rf_t-d_tf_r=\frac{1}{\lambda_1-\lambda_2}[e^{-\lambda_2 r-\lambda_1 t}-e^{-\lambda_2 t-\lambda_1 r}].
\end{eqnarray*}
Let ${\psi}=(x^2,y^2,z^2)'$. Similar to Eq.(\ref{equation6}), we
have
\begin{equation}\label{matrix00}
\begin{array}{ll}
  \quad \mu\tensor{\wedge}_i\tensor{Q}\tensor{\wedge}_i^2\vec{e}-\mu\tensor{\wedge}_i^2\tensor{Q}\tensor{\wedge}_i\vec{e}\\
  =\varphi_i'(\tensor{U Q}-\tensor{Q}'\tensor{U})\psi\\
  =\nu[x(y^2-z^2)+y(z^2-x^2)+z(x^2-y^2)]\\
  = \nu D.
\end{array}
\end{equation}
This ends the proof.
\hfill $\Box$

\section{The reversibility of the observed process}\label{section4}
The observed process is the same as in Section~\ref{section2}.
\begin{defn}
The observed process is said to be reversible if its
finite-dimensional distributions are invariant under reversal of
time, i.e., the flux of the likelihood function vanishes,
$$\Pr(S_{t_1}=s_1,\,S_{t_2}=s_2,\,\cdots,\,S_{t_n}=s_n)
-\Pr(S_{t_1^-}=s_1,\,S_{t_2^-}=s_2,\,\cdots,\,S_{t_n^-}=s_n)=0,$$
where $t_k^-=t_1+t_n-t_k$, for all positive integers $n$ and all
$0\leqslant t_1 \leqslant t_2 \leqslant \cdots\leqslant t_n$.
\end{defn}

\begin{defn}\label{regular}
Two rows of a matrix are said to be equal if they are two equal
vectors. If any two rows of the state-dependent probability matrix
$\tensor{\Pi}$ are not equal, we say that $\tensor{\Pi}$ is
regular. Otherwise, we say that $\tensor{\Pi}$ is singular, i.e.,
there are at least two undistinguishable states among the three
hidden states by means of observation.
\end{defn}
\begin{thm}\label{kaishi}
 If the underlying Markov process is reversible, then the observed process is reversible too.
\end{thm}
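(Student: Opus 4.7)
The plan is to lift the detailed balance of $\set{C_t}$ to detailed balance for $\tensor{P}(t)$, and then turn the matrix form of the $n$-dimensional likelihood into its time-reversed form by a scalar transpose. First I would observe that reversibility of $\set{C_t: t\in\Rnum^+}$ is equivalent to $\nu=0$ (Eq.~(\ref{flux})), which by Eq.~(\ref{matrix1}) is the same as $\tensor{U}\tensor{Q}=\tensor{Q}'\tensor{U}$. It is classical, and also drops out of Lemma~\ref{probability} since $\tensor{L}=\vec{e}\mu$ satisfies $\tensor{U}\tensor{L}=\tensor{L}'\tensor{U}$, that this rate-level detailed balance lifts to $\tensor{U}\tensor{P}(t)=\tensor{P}(t)'\tensor{U}$ for every $t\geqslant 0$, i.e.\ $\tensor{P}(t)'=\tensor{U}\tensor{P}(t)\tensor{U}^{-1}$.

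The core computation would then be to write, following Eq.~(2.27) of~\cite{mz},
$$F:=\Pr(S_{t_1}=s_1,\dots,S_{t_n}=s_n)=\mu\tensor{\wedge}_{s_1}\tensor{P}(\tau_1)\tensor{\wedge}_{s_2}\tensor{P}(\tau_2)\cdots\tensor{P}(\tau_{n-1})\tensor{\wedge}_{s_n}\vec{e},$$
with $\tau_k=t_{k+1}-t_k$. Since $F$ is a scalar, $F=F'$; taking the transpose and using $\tensor{\wedge}_{s_k}'=\tensor{\wedge}_{s_k}$, $\mu'=\tensor{U}\vec{e}$, and substituting $\tensor{P}(\tau_k)'=\tensor{U}\tensor{P}(\tau_k)\tensor{U}^{-1}$ in every factor, the internal $\tensor{U}^{\pm 1}$ pairs telescope, because each $\tensor{U}^{-1}\tensor{\wedge}_{s_k}\tensor{U}$ collapses to $\tensor{\wedge}_{s_k}$ (two diagonal matrices commute), while the outermost block combines via $\vec{e}'\tensor{\wedge}_{s_n}\tensor{U}=\mu\tensor{\wedge}_{s_n}$. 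After cancellation one should land on
$$F=\mu\tensor{\wedge}_{s_n}\tensor{P}(\tau_{n-1})\tensor{\wedge}_{s_{n-1}}\tensor{P}(\tau_{n-2})\cdots\tensor{P}(\tau_1)\tensor{\wedge}_{s_1}\vec{e}.$$

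Finally I would identify this with the time-reversed likelihood: the right-hand side is the probability of observing $s_n,s_{n-1},\dots,s_1$ at successive instants separated by gaps $\tau_{n-1},\tau_{n-2},\dots,\tau_1$. Shifting the origin to $t_1$ by stationarity, the corresponding time points are $t_1=t_n^-,\,t_{n-1}^-,\,\dots,\,t_1^-=t_n$, so that $S_{t_k^-}=s_k$ for each $k$, and hence $F=\Pr(S_{t_1^-}=s_1,\dots,S_{t_n^-}=s_n)$. Therefore the flux of every finite-dimensional distribution vanishes. The only genuinely delicate part is the notational bookkeeping matching the reversed times $t_k^-$ to the reordered gap sequence; the rest is pure algebra driven by the commutation of diagonal matrices with $\tensor{U}$.
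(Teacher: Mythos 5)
Your proof is correct and follows essentially the same route the paper relies on: the paper omits the argument for Theorem~\ref{kaishi} and defers to \cite[pp.~102--103]{mz}, whose discrete-time transpose/detailed-balance computation is exactly what you have written out, lifted to continuous time via $\tensor{U}\tensor{P}(t)=\tensor{P}(t)'\tensor{U}$ (which, as you note, drops out of Lemma~\ref{probability} since $\tensor{U}\tensor{L}=\mu'\mu$ is symmetric). The bookkeeping identifying the reversed product with $\Pr(S_{t_1^-}=s_1,\dots,S_{t_n^-}=s_n)$ via the gaps $\tau_k=t_{k+1}-t_k=t_k^--t_{k+1}^-$ is handled correctly, so there is no gap.
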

Although the proof in Reference \cite[P102-103]{mz} is about the
discrete-time hidden Markov model, it is still valid for the
continuous-time one and is ignored here.

\begin{thm}\label{dingl}
The observed process of the continuous-time three-state hidden
Makov model is irreversible, if and only if the underlying Markov
process is irreversible and the state-dependent probability matrix
is regular.
\end{thm}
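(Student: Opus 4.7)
The plan is to prove each direction via its contrapositive: (i) if the underlying Markov process is reversible or $\tensor{\Pi}$ is singular, the observed process is reversible; (ii) if the underlying is irreversible ($\nu\neq 0$) and $\tensor{\Pi}$ is regular, some flux of the likelihood function is non-vanishing.

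For (i), the reversible-underlying case is Theorem~\ref{kaishi}. For the singular case, suppose two rows of $\tensor{\Pi}$, say rows $u$ and $v$, coincide. Emissions from hidden states $u$ and $v$ are then statistically indistinguishable, so the joint law of $\{S_t\}$ depends on $\{C_t\}$ only through the lumped two-valued process $\tilde C_t=\mathbb{I}(C_t\in\{u,v\})$. By the strong Markov property of $C_t$, $\tilde C_t$ is a stationary alternating renewal process (exponential sojourns in the singleton state, and iid sojourns in $\{u,v\}$ with common law equal to the hitting time of the third state from the fixed entry distribution), hence time-reversal invariant; since emissions are conditionally independent given $\{C_t\}$, reversibility passes to $\{S_t\}$.

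For (ii), regularity forces $\text{rank}(\tensor{\Pi})\in\{2,3\}$, since rank one would make all rows proportional and the row-sum-to-one condition would then force them equal. If the rank is $3$, choose a basis $\varphi_{i_1},\varphi_{i_2},\varphi_{i_3}$ of $\Rnum^3$ among the columns and suppose, for contradiction, that $A_{ij}=0$ for all $i,j$. The identification $A_{ij}=\det[\vec e,\varphi_i,\varphi_j]$ used in the proof of Corollary~\ref{cor2} shows that $\vec e$ lies in each pairwise span; expanding $\vec e=\alpha_1\varphi_{i_1}+\alpha_2\varphi_{i_2}+\alpha_3\varphi_{i_3}$ in the basis, the three constraints force $\alpha_1=\alpha_2=\alpha_3=0$, contradicting $\vec e\neq 0$. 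Hence some $A_{ij}\neq 0$ and Proposition~\ref{prop1} gives a non-vanishing two-dimensional flux for every $t>0$. If the rank is $2$, regularity together with the row-sum condition makes any two rows linearly independent (a linear dependence of two rows of unit sum forces equality), so the third row equals $\alpha$ times the first plus $(1-\alpha)$ times the second for some $\alpha\notin\{0,1\}$. A column-wise check then shows that for every column $\varphi_k$, if two entries agree then so does the third; since not every column can be constant (otherwise the three rows coincide), some column has three distinct entries, producing $D\neq 0$ in Theorem~\ref{3dim likely} and hence a non-zero three-dimensional flux whenever $r\neq t$.

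The main obstacle is the rank-$3$ linear-algebra step in case (ii), but once $A_{ij}=\det[\vec e,\varphi_i,\varphi_j]$ is recognized, the contradiction falls out in one line by expanding $\vec e$ in the chosen basis. A minor technical point, handled uniformly by Remark~\ref{remark}, is that the flux expressions of Proposition~\ref{prop1} and Theorem~\ref{3dim likely} remain valid and non-vanishing in the degenerate case $\lambda_1=\lambda_2$ through the limiting forms of $d_t$ and $f_t$.
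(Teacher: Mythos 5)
Your proof is correct in its overall architecture, which matches the paper's: both directions reduce to Theorem~\ref{kaishi} plus a case analysis on $\tensor{\Pi}$ (singular; regular of rank $2$; rank $3$), and your rank-$2$ case is essentially the paper's Proposition~\ref{prop11} rephrased in terms of rows rather than Lemma~\ref{cite3}. The other two cases take genuinely different routes. For the singular case the paper (Proposition~\ref{prop3}) writes $\tensor{\wedge}_{s_k}=x_k\tensor{I}+y_k\tensor{E}_3$, expands the $n$-dimensional flux, and checks that every surviving term collapses to a product of scalars $\mu_3\tensor{P}_{33}(\cdot)\cdots\tensor{P}_{33}(\cdot)$, which is invariant under reversing the order of the factors; you instead lump the hidden chain to the two-valued process $\tilde C_t$ and invoke reversibility of a stationary alternating renewal process. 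That claim is true (it amounts to reversibility of a two-state semi-Markov process whose holding times depend only on the current lumped state, cf.\ \cite{wq}), and the lumping viewpoint is illuminating, but as written it is asserted rather than proved and it carries the entire weight of that case --- you should supply a proof or a citation, since it is no more obvious than the theorem itself. For rank $3$ the paper (Proposition~\ref{prop2}) argues by contradiction: vanishing of all $2$-dimensional fluxes lets one reconstruct $\tensor{E}_1,\tensor{E}_2$ from the $\tensor{\wedge}_i$, giving $\mu_1\tensor{P}_{12}(t)=\mu_2\tensor{P}_{21}(t)$ for all $t$, and differentiation at $t=0$ forces $\nu=0$; your determinant argument (that $\det[\vec e,\varphi_i,\varphi_j]=0$ for all three basis pairs would force $\vec e=0$) is shorter, avoids the limit $t\to 0$, and even exhibits a concrete pair of symbols with non-zero flux. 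One small overstatement: if $\tensor{Q}$ has complex conjugate eigenvalues $a\pm b\mi$ (possible for an irreversible chain), the factor $e^{-\lambda_2 t}-e^{-\lambda_1 t}$ vanishes whenever $bt\in\pi\Znum$, so the $2$-dimensional flux is non-zero only for $t$ outside a discrete set, not ``for every $t>0$''; likewise for the $3$-dimensional flux and ``$r\neq t$''. This costs nothing for irreversibility, which needs only one non-vanishing flux.
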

Proof of Theorem~\ref{dingl} is presented in Section~\ref{subsec}.

If $\tensor{\Pi}$ is regular, the rank of $\tensor{\Pi}$ is $3$ or
$2$. If $\tensor{\Pi}$ is singular, the rank of $\tensor{\Pi}$ is
$2$ or $1$. That is to say, the rank of $\tensor{\Pi}$ is involved
in the reversibility of the observed processes.

By Reference~\cite{kem}, the hard limiting (or clipping)
transformation is very useful from a practical viewpoint, and the
rhythm inherited in the binary series carries a great deal of
information about the original series. If we maintain the
regularity condition of $\tensor{\Pi}$ when clipping the observed
process of the hidden Markov model, it preserves the
time-reversibility property by the last theorem (e.g.,
Example~\ref{exmp2}).

The time reversibility of high-order hidden Markov models (e.g.,
more than four-state) is difficult to be solved completely. We can
only find some simple sufficient conditions of the
irreversibility, for example, the underlying Markov process is
irreversible and the rank of the state-dependent probability
matrix is equal to the number of states of the underlying Markov
process (similar to Proposition~\ref{prop2}).
\begin{rem}
Let the complete process be $\set{X_t=(C_t,\,S_t),\,t\geqslant
0}$. Clearly, it is still be a finite-state Markov process, please
refer to \cite{bp}. Similar to Theorem~2.1 of Reference~\cite{BM},
by Kolmogorov's criterion, we can show that the complete process
is reversible if and only if the underlying process is reversible.
That is to say, there are two different types of reversibility of
the hidden Makov model.
\end{rem}
\subsection{Applications}
\begin{exmp}\label{exmp1}
The deterministic function of a Markov process is a special case
of hidden Markov model. Let $f=(f_1,\,f_2,\,f_3)'$ be a function
defined on the state space. If $f=(1,1,0)'$ or $f=(1,0,0)'$ like
that used in Reference~\cite{qe}, then the state-dependent
probability matrices are respectively
\begin{equation*}
\tensor{\Pi}_1= \left[
\begin{array}{ll}
     0&1 \\
     0&1\\
     1&0
\end{array}
\right ], \qquad \quad \tensor{\Pi}_2= \left[
\begin{array}{ll}
     0&1 \\
     1&0\\
     1&0
\end{array}
\right ].
\end{equation*}
Since the state-dependent probability matrices are singular, the
observed processes is reversible by Theorem~\ref{dingl}.
\end{exmp}
\begin{exmp}\label{exmp2}
Suppose $\set{C_t,\,t\geqslant 0}$ be the irreversible Markov
process with transition rate matrix
\begin{equation}\label{matrix q}
\tensor{Q}= \left[
\begin{array}{lll}
      -2/3&1/3 &1/3\\
     2/3&-1&1/3\\
     1/2&1/2 &-1
\end{array}
\right ].
\end{equation}
Let $\set{S_t}$\footnote{$\set{S_t}$ comes from the example in
Reference~\cite[p105]{mz}.}, $\set{\xi_t},\,\set{\eta_t}$ be three
observed processes with state-dependent probability matrices
respectively
\begin{equation*}
 \hspace{-12mm}\tensor{\Pi}_1= \left[
\begin{array}{lll}
     1&0&0 \\
     1/4&1/2&1/4\\
     0&0&1
\end{array}
\right ], \quad  \tensor{\Pi}_2= \left[
\begin{array}{lll}
     1&0&0 \\
     1/4&1/2&1/4\\
     1/2&1/3&1/6
\end{array}
\right ],\quad \tensor{\Pi}_3= \left[
\begin{array}{ll}
     1&0\\
     1/4&3/4\\
     0&1
\end{array}
\right ].
\end{equation*}
Since all the state-dependent probability matrices are regular,
the observed processes are irreversible by Theorem~\ref{dingl}.
$\set{\eta_t}$ is clipped from $\set{S_t}$ and preserves
irreversible. Since both the rank of $\tensor{\Pi}_2$ and
$\tensor{\Pi}_3$ are $2$, by Corollary~\ref{cor2}, we have that
\begin{eqnarray*}
 \Pr\set{\xi_t=i,\xi_{t+r}=j}=\Pr\set{\xi_t=j,\xi_{t+r}=i},\\
 \Pr\set{\eta_t=i,\eta_{t+r}=j}=\Pr\set{\eta_t=j,\eta_{t+r}=i}.
\end{eqnarray*}
In the case, one cannot detect irreversibility by comparing
directional moments $\Enum(\xi_t\xi^n_{t+r})$ and
$\Enum(\xi_t^n\xi_{t+r})$ with $n\in\Nnum$ like that used in
Reference~\cite[p104]{mz}.
\end{exmp}
\subsection{Proof} \label{subsec}
Let $\vec{e}_1=(1,0,0)',\vec{e}_2=(0,1,0)',\,\vec{e}_3=(0,0,1)'$,
$\tensor{E}_i=\diag\set{\vec{e}_i}$,
$\tensor{\wedge}_k=\diag\set{\varphi_{k}}$.

\begin{prop}\label{prop2}
If the underlying Markov process is irreversible and the rank of
the state-dependent probability is $3$, then the observed  process
is irreversible.
\end{prop}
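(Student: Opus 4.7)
The plan is to exhibit a pair of observed states $i \neq j$ and a time $t > 0$ for which the two-dimensional flux from Proposition~\ref{prop1} fails to vanish; this immediately contradicts the definition of reversibility. Since the underlying chain is irreversible, Eq.~(\ref{flux}) gives $\nu \neq 0$, and the exponential factor $(e^{-\lambda_2 t} - e^{-\lambda_1 t})/(\lambda_1-\lambda_2)$ — interpreted as $t\, e^{-\lambda_1 t}$ in the coincident-eigenvalue case by Remark~\ref{remark} — is strictly positive for every $t > 0$. The problem therefore reduces to producing indices $i \neq j$ with $A = \det[\vec{e},\varphi_i,\varphi_j] \neq 0$.

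First I record the crucial structural fact that the rows of $\tensor{\Pi}$ sum to $1$, which is equivalent to
$$\sum_{k=0}^{K-1} \varphi_k = \vec{e}.$$
Hence $\vec{e}$ lies in the column span of $\tensor{\Pi}$. This is exactly what was used in Corollary~\ref{cor2}, and now I use it in the opposite direction.

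Next I argue by contradiction that some pair $\{\varphi_i,\varphi_j\}$ together with $\vec{e}$ is linearly independent. Suppose instead that for every $i \neq j$ the triple $\{\vec{e},\varphi_i,\varphi_j\}$ is linearly dependent. Two cases arise. If some column $\varphi_{k_0}$ is not parallel to $\vec{e}$, then for each other column $\varphi_j$ the dependence relation forces $\varphi_j \in \operatorname{span}\{\vec{e},\varphi_{k_0}\}$, a fixed two-dimensional subspace; hence every column of $\tensor{\Pi}$ lies in that plane, contradicting $\operatorname{rank}(\tensor{\Pi}) = 3$. If on the contrary every column is parallel to $\vec{e}$, then $\operatorname{rank}(\tensor{\Pi}) \leqslant 1$, again a contradiction. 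Therefore indices $i \neq j$ with $A \neq 0$ must exist.

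With that pair in hand, Proposition~\ref{prop1} yields
$$\Pr\{S_0=i,\,S_t=j\} - \Pr\{S_0=j,\,S_t=i\} = \frac{\nu A}{\lambda_1-\lambda_2}\bigl[e^{-\lambda_2 t}-e^{-\lambda_1 t}\bigr] \neq 0$$
for every $t > 0$, which shows that the observed process is irreversible. The computation itself is trivial; the only real content is the short linear-algebra step guaranteeing that $\vec{e}$ cannot be caught inside every two-plane spanned by two columns of $\tensor{\Pi}$ when the rank is $3$, and that is the main (but modest) obstacle.
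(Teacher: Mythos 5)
Your proof is correct, but it travels a different road from the paper's. You argue forwards: rank $3$ plus the row-sum identity $\sum_k\varphi_k=\vec{e}$ forces some pair $i\neq j$ with $\det[\vec{e},\varphi_i,\varphi_j]\neq 0$ (your two-case dependence argument is sound), and then Proposition~\ref{prop1} exhibits a nonvanishing $2$-dimensional flux. The paper instead argues by contradiction without ever invoking the closed form of Proposition~\ref{prop1}: assuming all pairwise fluxes vanish, it expands the coordinate vectors $\vec{e}_1,\vec{e}_2$ in the basis $\varphi_0,\varphi_1,\varphi_2$, deduces by bilinearity that $\mu_1\tensor{P}_{12}(t)-\mu_2\tensor{P}_{21}(t)=0$ for all $t$, and lets $t\to 0^+$ to force $\nu=0$. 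The two arguments are dual uses of the rank-$3$ hypothesis (you span $\Rnum^3$ by $\set{\vec{e},\varphi_i,\varphi_j}$; the paper spans it by the $\varphi_k$ alone), and yours buys an explicit witness pair $(i,j)$ at the cost of the extra determinant lemma, while the paper's avoids the eigenvalue formula entirely and so generalizes more readily to more states. One imprecision to fix: your claim that $(e^{-\lambda_2 t}-e^{-\lambda_1 t})/(\lambda_1-\lambda_2)$ is strictly positive for \emph{every} $t>0$ fails when $\lambda_1,\lambda_2$ are complex conjugates $a\pm b\mi$ (a case that genuinely occurs for irreversible chains), since the factor then equals $e^{-at}\sin(bt)/b$ and vanishes at $t=k\pi/b$. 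This does not damage the conclusion --- the factor is nonzero for all small $t>0$, and irreversibility only requires one such $t$ --- but you should weaken ``for every $t>0$'' to ``for some $t>0$.''
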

\begin{proof}
Since the rank of the state-dependent probability is $3$, we can
choose a base of $\Rnum^3$, without loss generality, to be
$\varphi_{0},\varphi_{1},\varphi_{2}$. Then
$\vec{e}_1=\sum_{i=0}^2 x_i\varphi_i$, $\vec{e}_2=\sum_{j=0}^2
y_j\varphi_j$, and $\tensor{E}_1=\sum_{i=0}^2\tensor{\wedge}_i x_i
$, $\tensor{E}_2=\sum_{j=0}^2 \tensor{\wedge}_j y_j$.

Suppose on the contrary that the observed process is reversible.
When $t>0$,
\begin{eqnarray*}
0 &=& \Pr\set{S_0=i,S_t=j}-\Pr\set{S_0=j,S_t=i} \\
 &=& \mu \tensor{\wedge}_i\tensor{P}(t)\tensor{\wedge}_j\vec{e}-\mu
 \tensor{\wedge}_j\tensor{P}(t)\tensor{\wedge}_i\vec{e}, {\quad  \textup{where} \quad}i,j=0,1,2.
\end{eqnarray*}
Thus
\begin{eqnarray*}
  \mu_1 \tensor{P}_{12}(t)-\mu_2 \tensor{P}_{21}(t)
     &=& \mu \tensor{E}_1\tensor{P}(t)\tensor{E}_2\vec{e}-\mu
     \tensor{E}_2\tensor{P}(t)\tensor{E}_1\vec{e}\\
     &=&\sum\limits_{i,j=0}^2  x_i y_j
     \mu \tensor{\wedge}_i\tensor{P}(t)\tensor{\wedge}_j\vec{e}- \sum\limits_{i,j=0}^2 y_j x_i
     \mu \tensor{\wedge}_j\tensor{P}(t)\tensor{\wedge}_i\vec{e}\\
     &=&\sum\limits_{i,j=0}^2  x_i y_j[
     \mu \tensor{\wedge}_i\tensor{P}\tensor{\wedge}_j\vec{e}-
     \mu \tensor{\wedge}_j\tensor{P}\tensor{\wedge}_i\vec{e}]\\
     &=&0
\end{eqnarray*}
Note that
\begin{equation*}
  \lim\limits_{t\to
  0+}\frac{\tensor{P}(t)-\tensor{I}}{t}=\tensor{Q}.
\end{equation*}
Then $\nu=\mu_1 a_2-\mu_2 b_1=0$. However, the underlying Markov
process is irreversible, i.e., $\nu\neq 0$, a contradiction.
\end{proof}
\begin{lem}\label{cite3}
If the rank of $\tensor{\Pi}$ is $2$, then all its columns
$\set{\varphi_i, i=0,1,2,\dots,K-1}$ are the linear combination of its
certain column and $\vec{e}=(1,1,1)'$.
\end{lem}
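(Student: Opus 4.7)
The plan is to identify a concrete two-element basis of the column space of $\tensor{\Pi}$ consisting of $\vec{e}$ together with one of the original columns, and then expand every column in that basis. The key structural input is the row-sum constraint: each row of $\tensor{\Pi}$ sums to $1$, which in column language is exactly the identity $\sum_{k=0}^{K-1}\varphi_k=\vec{e}$. This shows immediately that $\vec{e}$ lies in the column space $V:=\mathrm{span}\set{\varphi_0,\dots,\varphi_{K-1}}$.

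Next I would use the hypothesis $\mathrm{rank}\,\tensor{\Pi}=2$ to produce a column $\varphi_j$ that is not a scalar multiple of $\vec{e}$. If every $\varphi_k$ were proportional to $\vec{e}$, then $V$ would be contained in the one-dimensional line $\mathrm{span}\set{\vec{e}}$, contradicting $\dim V=2$. Fix any such $\varphi_j$; then $\set{\vec{e},\varphi_j}$ is linearly independent and sits inside the $2$-dimensional space $V$, so by a dimension count it is a basis of $V$.

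Finally, since every $\varphi_i$ belongs to $V$, it admits a (unique) expansion $\varphi_i=\alpha_i\,\vec{e}+\beta_i\,\varphi_j$, which is precisely the statement of the lemma. There is no real obstacle here: the argument is a one-line combination of the stochastic row-sum identity with a dimension count, and the only point worth noting is that the existence of a column not collinear with $\vec{e}$ uses the rank hypothesis in an essential way (the rank-$1$ case is structurally different and is handled in the proof of Corollary~\ref{cor2}).
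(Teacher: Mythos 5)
Your proof is correct and takes essentially the same route as the paper: both rest on the row-sum identity $\sum_{k}\varphi_k=\vec{e}$ to place $\vec{e}$ in the column space, use the rank-$2$ hypothesis to find a column not collinear with $\vec{e}$, and conclude by a basis argument. The paper phrases this by expanding $\vec{e}$ in a basis $\set{\varphi_1,\varphi_2}$ of columns and solving for $\varphi_2$, while you replace that step with a direct dimension count showing $\set{\vec{e},\varphi_j}$ is itself a basis; the difference is only cosmetic.
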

\begin{proof}
If the rank is $2$, then $K\geqslant 2$. Without loss generality,
let $\set{\varphi_1,\,\varphi_2}$ be one base of $\set{\varphi_i,
i=0,1,2,\dots,K-1}$ with $\varphi_1\neq c\,\vec{e}$, where
$c\in\Rnum$. Note that
$\vec{e}=\sum_{i=0}^{K-1}\varphi_i=x\varphi_1+y\varphi_2$, where
$y\neq0,\,x\in \Rnum$, thus $\varphi_2=(\vec{e}-x\varphi_1 )/y$.
Since $\set{\varphi_1,\,\varphi_2}$ is one base, all
$\set{\varphi_i, i=0,1,2,\dots,K-1}$ are the linear combination of
$\set{\varphi_1,\,\vec{e}}$.
\end{proof}
Without loss generality, if the rank of $\tensor{\Pi}$ is $2$, let
$\set{\varphi_i, i=0,1,2,\dots,K-1}$ be the linear combination of
$\varphi_1$ and $\vec{e}$. Let $\wedge=\diag\set{\varphi_1}$.

\begin{prop}\label{prop11}
Suppose that the underlying Markov process is irreversible. If the
rank of $\tensor{\Pi}$ is $2$, and if any two rows of
$\tensor{\Pi}$ are not equal, then the observed process is
irreversible.
\end{prop}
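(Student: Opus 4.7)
The plan is to derive a contradiction by applying Theorem~\ref{3dim likely} to a well-chosen emission state and exhibiting a nonvanishing $3$-dimensional flux. Since the underlying Markov process is irreversible we already have $\nu\neq 0$, so the task reduces to producing an index $i$ with $D=(x-y)(y-z)(z-x)\neq 0$, where $(x,y,z)'=\varphi_i$.

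First I would invoke Lemma~\ref{cite3}: rank $2$ of $\tensor{\Pi}$ yields a distinguished column $\varphi^{*}=(x^{*},y^{*},z^{*})'$, not proportional to $\vec{e}$, such that every column admits the representation $\varphi_k=\alpha_k\varphi^{*}+\beta_k\vec{e}$. Reading off entries, the three rows of $\tensor{\Pi}$ become the sequences $(\alpha_k x^{*}+\beta_k)_k$, $(\alpha_k y^{*}+\beta_k)_k$, $(\alpha_k z^{*}+\beta_k)_k$. Because the rank is $2$ the scalars $\alpha_k$ cannot all vanish, so equality of any two rows is equivalent to equality of the corresponding coordinates of $\varphi^{*}$. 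The regularity hypothesis therefore forces $x^{*},y^{*},z^{*}$ to be pairwise distinct, and in particular the scalar $D^{*}=(x^{*}-y^{*})(y^{*}-z^{*})(z^{*}-x^{*})$ attached to $\varphi^{*}$ itself is nonzero.

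Letting $i^{*}$ be the index of $\varphi^{*}$, Theorem~\ref{3dim likely} then gives
\begin{equation*}
\Pr\{S_0=S_r=S_{r+t}=i^{*}\}-\Pr\{S_0=S_t=S_{t+r}=i^{*}\}=\frac{\nu D^{*}}{\lambda_1-\lambda_2}\bigl(e^{-\lambda_2 r-\lambda_1 t}-e^{-\lambda_2 t-\lambda_1 r}\bigr),
\end{equation*}
with the convention of Remark~\ref{remark} when $\triangle=0$. The prefactor $\nu D^{*}$ is nonzero, and the quotient $(e^{-\lambda_2 r-\lambda_1 t}-e^{-\lambda_2 t-\lambda_1 r})/(\lambda_1-\lambda_2)$ (respectively, its limiting form $(r-t)e^{-\lambda_1(r+t)}$ when $\lambda_1=\lambda_2$) can be made nonzero by a suitable choice of $r\neq t$. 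Hence the $3$-dimensional flux does not vanish, contradicting reversibility. The only real obstacle is the row/column translation in the first step; once the combinatorial condition ``no two rows of $\tensor{\Pi}$ are equal'' is recast as the analytic condition ``$x^{*},y^{*},z^{*}$ are pairwise distinct'', the remainder is a direct substitution into Theorem~\ref{3dim likely}.
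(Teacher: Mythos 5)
Your proposal is correct and follows essentially the same route as the paper: use Lemma~\ref{cite3} to reduce the regularity hypothesis to the pairwise distinctness of the coordinates of the distinguished column, conclude $D\neq 0$, and then apply Theorem~\ref{3dim likely} with $\nu\neq 0$ and $r\neq t$ to exhibit a nonvanishing $3$-dimensional flux. The only difference is that you spell out the row/column translation and the $\triangle=0$ limiting case more explicitly than the paper does.
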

\begin{proof}
We claim that $\varphi_1=(x,y,z)'$ with $x\neq y\neq z$.
Without loss of generality, suppose on the contrary that
$\varphi_1=(x,x,z)'$, then the first two rows of $\tensor{\Pi}$
are equal by Lemma~\ref{cite3}, a contradiction. Thus
$$D=(x-y)(y-z)(z-x)\neq 0.$$

Since the underlying Markov process is irreversible, the
transition rate flux $\nu \neq 0$. By Theorem~\ref{3dim likely}, when
$r,t>0$ and $r\neq t$,
$$\Pr\set{S_0=S_r=S_{r+t}=1} -\Pr\set{S_0=S_t=S_{t+r}=1}\neq 0,$$
i.e., the observed process is irreversible.
\end{proof}

\begin{prop}\label{prop3}
If there are two equal rows of $\tensor{\Pi}$, then the observed
process is time reversible.
\end{prop}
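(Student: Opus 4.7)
The plan is to reduce Proposition~\ref{prop3} to the reversibility of a binary-valued projection of $\{C_t\}$, and then to observe that this projection is a stationary alternating renewal process with i.i.d.\ blocks, hence automatically time-reversible.

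Without loss of generality suppose the first two rows of $\tensor{\Pi}$ coincide, so $\pi(s\mid 1)=\pi(s\mid 2)$ for every observable $s$. Set $Y_t:=\mathbf{1}_{\{C_t=3\}}$. Because the conditional law of $S_t$ given the hidden chain depends on $C_t$ only through $Y_t$, for every $n\ge 1$ and $0\le t_1<\cdots<t_n$,
\[
\Pr(S_{t_1}=s_1,\dots,S_{t_n}=s_n)=\sum_{y\in\{0,1\}^{n}}\Pr(Y_{t_1}=y_1,\dots,Y_{t_n}=y_n)\prod_{k=1}^{n}\tilde\pi(s_k\mid y_k),
\]
where $\tilde\pi(\cdot\mid 0)=\pi(\cdot\mid 1)$ and $\tilde\pi(\cdot\mid 1)=\pi(\cdot\mid 3)$. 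The weight $\prod_k\tilde\pi(s_k\mid y_k)$ depends only on $s$ and $y$, not on the times, so the same expansion holds for the reversed likelihood $\Pr(S_{t_1^-}=s_1,\dots,S_{t_n^-}=s_n)$ with every $\Pr(Y_{t_j}=y_j,\dots)$ replaced by $\Pr(Y_{t_j^-}=y_j,\dots)$. Consequently the flux of the observed likelihood vanishes for every $(s_k)$ and $(t_k)$ as soon as $\{Y_t\}$ is reversible, and it suffices to prove that.

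By the strong Markov property applied at successive hits of state $3$, $\{Y_t\}$ is a stationary alternating renewal process in which (i) the sojourn lengths in state $3$ are i.i.d.\ $\mathrm{Exp}(c_3)$, (ii) every excursion away from $3$ starts from the fixed entry distribution $(c_1/c_3,c_2/c_3)$ on $\{1,2\}$ and is terminated by the first visit to $3$, so the excursion lengths are i.i.d.\ with a common distribution, and (iii) the two block-length sequences are independent. A stationary alternating renewal process of this form is always time-reversible: the (age, residual) pair of the block containing any fixed time has the classical joint density $f(a+r)/\mu$ which is symmetric in $a$ and $r$; the past and future block sequences are two independent i.i.d.\ sequences with identical alternating marginals; and therefore the process agrees in law with its time reversal. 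This yields $\{Y_t\}\stackrel{d}{=}\{Y_{-t}\}$, and hence the reversibility of $\{S_t\}$.

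The only genuinely nontrivial ingredient is the classical fact that a stationary alternating renewal process with i.i.d.\ blocks is reversible; the reduction to $\{Y_t\}$ uses only the coincidence of two rows of $\tensor{\Pi}$, and the alternating-renewal structure of $\{Y_t\}$ is immediate from the strong Markov property.
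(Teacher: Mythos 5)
Your proof is correct, and it reaches the conclusion by a genuinely different route from the paper's. The paper argues algebraically: with the first two rows of $\tensor{\Pi}$ equal, every column satisfies $\varphi_{s_k}=x_k\vec{e}+y_k\vec{e}_3$, so $\tensor{\wedge}_{s_k}=x_k\tensor{I}+y_k\tensor{E}_3$; expanding the $r$-point likelihood and using $\mu\tensor{P}(t)=\mu$, $\tensor{P}(t)\vec{e}=\vec{e}$ and the semigroup property, every surviving term collapses to a product of scalars $\mu_3\tensor{P}_{33}(u_1)\cdots\tensor{P}_{33}(u_m)$, which is manifestly invariant under reversing the order of the gaps $u_j$, so the flux vanishes. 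You instead lump states $1$ and $2$, note that the observed likelihood factors through the binary process $Y_t=\mathbf{1}_{\set{C_t=3}}$, and invoke the classical reversibility of a stationary alternating renewal process with independent i.i.d.\ on- and off-blocks. Both arguments ultimately exploit the same structural fact, namely that the distinguished block $\set{3}$ is a singleton: in the paper this appears as $\tensor{E}_3\tensor{P}(t)\tensor{E}_3=\tensor{P}_{33}(t)\,\tensor{E}_3$ being scalar times a rank-one projector, and in yours as every excursion into $\set{1,2}$ starting from the fixed entry law $(c_1/c_3,c_2/c_3)$, which is exactly what makes the excursion lengths i.i.d. Your version is more conceptual, explains \emph{why} the proposition holds, and makes transparent that it has nothing to do with there being exactly three hidden states (it works whenever all but one hidden state emit identically); the paper's version is more elementary, needs no Palm calculus or renewal theory, and transfers verbatim to the discrete-time case of the appendix. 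If you keep the renewal route, you should either prove or cite the reversibility of the stationary alternating renewal process (symmetric age--residual density of the covering block plus identically distributed, independent past and future block sequences), since that classical fact carries the entire weight of the argument.
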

\begin{proof}
If the rank of $\tensor{\Pi}$ is $1$, i.e., all the three rows of
$\tensor{\Pi}$ are equal, the observed process is in fact
identical independent distribution series.

If the rank of $\tensor{\Pi}$ is $2$, without loss of generality,
suppose the first and the second row of $\tensor{\Pi}$ are equal.
Then $\varphi_{s_k}=x_k\vec{e}+y_{k}\vec{e}_3$ and
$\wedge_{s_k}=x_k\tensor{I}+y_k\tensor{E}_3$. The flux of the
likelihood function is
\begin{eqnarray*}
& &\Pr(S_{t_1}=s_1,\,S_{t_1+t_2}=s_2,\,\cdots,\,S_{t_1+t_2+\dots+t_r}=s_r)\\
&-&\Pr(S_{t_1}=s_r,\,S_{t_1+t_{r}}=s_{r-1},\cdots,\,S_{t_1+t_{r}+\dots+t_2}=s_1)\\
&=&
\mu\wedge_{s_{_1}}\tensor{P}(t_2)\wedge_{s_{_2}}\tensor{P}(t_3)\cdots\tensor{P}(t_r)\wedge_{s_{_r}}\vec{e}
-\mu\wedge_{s_{_r}}\tensor{P}(t_r)\wedge_{s_{_{r-1}}}\tensor{P}(t_{r-1})\cdots\tensor{P}(t_2)\wedge_{s_{_1}}\vec{e}\\
&=&\mu[x_{1}\tensor{I}+y_{1}\tensor{E}_3]\tensor{P}(t_2)[x_{2}\tensor{I}+y_{2}\tensor{E}_3]
\tensor{P}(t_3)\cdots\tensor{P}(t_r)[x_{r}\tensor{I}+y_{r}\tensor{E}_3]\vec{e}\\
&-&\mu[x_{r}\tensor{I}+y_{r}\tensor{E}_3]\tensor{P}(t_r)[x_{r-1}\tensor{I}+y_{r-1}\tensor{E}_3]
\tensor{P}(t_{r-1})\cdots\tensor{P}(t_2)[x_{1}\tensor{I}+y_{1}\tensor{E}_3]\vec{e}.
\end{eqnarray*}
Expand the expression, and delete the terms which do not contain
$\tensor{E}_3$. Note that $\tensor{E}_3^l=\tensor{E}_3$ for
$l\in\Nnum$(i.e., $\tensor{E}_3$ is projective matrix),
$\tensor{P}(t)\vec{e}=\vec{e}$, $\mu\tensor{P}(t)=\mu$, and
$\tensor{P}(t)\tensor{P}(r)=\tensor{P}(t+r)$ for $t,\,r\in
\Rnum^+$. All other terms pairwise satisfy that
\begin{eqnarray*}
 && \mu\cdots
\tensor{P}(t_{i})\tensor{I}\cdots\tensor{P}(t_{j})\tensor{E}_3\cdots\vec{e}-\mu\cdots
\tensor{E}_3\tensor{P}(t_{j})\cdots\tensor{IP}(t_{i})\cdots\vec{e}\\
  &=&\mu(\tensor{E}_3\tensor{P}(r)\tensor{E}_3)\cdots(\tensor{E}_3\tensor{P}(t)\tensor{E}_3)\vec{e}
  -\mu(\tensor{E}_3\tensor{P}(t)\tensor{E}_3)\cdots(\tensor{E}_3\tensor{P}(r)\tensor{E}_3)\vec{e}\\
  &= &\mu_3 \tensor{P}_{33}(r)\cdots\tensor{P}_{33}(t)-\mu_3
  \tensor{P}_{33}(t)\cdots\tensor{P}_{33}(r)\\
  &=&0.
\end{eqnarray*}
This ends the proof.
\end{proof}

\begin{prop}\label{mainthm}
Suppose that the underlying Markov process is irreversible.
\begin{itemize}
 \item[1)] If the state-dependent probability matrix $\tensor{\Pi}$
is singular, then the observed  process is reversible.
 \item[2)] If the state-dependent probability matrix $\tensor{\Pi}$
is regular, then the observed process is irreversible.
\end{itemize}
\end{prop}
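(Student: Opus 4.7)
The plan is to reduce Proposition~\ref{mainthm} entirely to the three preceding propositions (\ref{prop2}, \ref{prop11}, \ref{prop3}) through a short case analysis on the rank of $\tensor{\Pi}$, using Definition~\ref{regular} to translate between the dichotomy regular/singular and the dichotomy ``no two rows equal''/``some two rows equal.''

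For part~1), I would simply observe that by Definition~\ref{regular}, $\tensor{\Pi}$ singular is by definition the existence of two rows of $\tensor{\Pi}$ that coincide as vectors. That is exactly the hypothesis of Proposition~\ref{prop3}, whose conclusion already gives reversibility of the observed process. Notice that for this direction we do not actually need the hypothesis that the underlying Markov process is irreversible; Proposition~\ref{prop3} holds regardless.

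For part~2), I would argue on the rank of the $3\times K$ matrix $\tensor{\Pi}$. First I would rule out rank~$1$: if every row were a scalar multiple of a single vector $v\in\Rnum^K$, then since each row of $\tensor{\Pi}$ must sum to $1$, all three scalars would have to equal $1/\sum_k v_k$, forcing the three rows to be identical and making $\tensor{\Pi}$ singular, contrary to the assumption of regularity. Hence under regularity the rank of $\tensor{\Pi}$ is either $2$ or $3$. In the rank~$3$ case, Proposition~\ref{prop2} (applied with the hypothesis that $\nu\neq 0$) immediately delivers irreversibility of $\set{S_t}$. In the rank~$2$ case, regularity says no two rows coincide, so the hypotheses of Proposition~\ref{prop11} are met, and its conclusion again gives irreversibility.

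The main obstacle is essentially absent: the genuine analytic content has already been invested in Propositions~\ref{prop1} and \ref{3dim likely} and their descendants, where the quantities $\nu A$ and $\nu D$ were identified as the natural obstructions at the 2- and 3-dimensional levels. The only point in the present proof requiring any thought is the rank-$1$ exclusion above, and it hinges solely on the row-stochasticity of $\tensor{\Pi}$. Everything else is a matching of hypotheses to the appropriate earlier proposition and a citation of its conclusion.
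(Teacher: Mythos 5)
Your proposal is correct and follows essentially the same route as the paper: part~1) is exactly Proposition~\ref{prop3}, and part~2) splits on whether the rank of $\tensor{\Pi}$ is $3$ or $2$, invoking Proposition~\ref{prop2} and Proposition~\ref{prop11} respectively. Your explicit justification that regularity excludes rank~$1$ (via row-stochasticity) is a small detail the paper asserts without proof, but it does not change the argument.
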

\begin{proof}
The first case is Proposition~\ref{prop3}. If $\tensor{\Pi}$ is
regular, then the rank of $\tensor{\Pi}$ is $3$ or $2$. Thus the
second case is Proposition~\ref{prop11} and
Proposition~\ref{prop2}.
\end{proof}

\noindent{\it Proof of Theorem \ref{dingl}.\,}
It can be shown directly by Theorem~\ref{kaishi} and
Proposition~\ref{mainthm}.
{\hfill\large{$\Box$}}

\section{Appendix: the discrete-time case}
Let $\set{S_t: \,t\in \Znum^+}$ be the observed process with state
space $\mathcal{S}=\set{0,1,2,\cdots,K-1}$.
Let $\set{C_t: \,t\in \Znum^+}$ be an irreducible three-state
Markov chain with the 1-step transition probability matrix
$\tensor{P}$ and the stationary distribution
$\mu=(\mu_1,\mu_2,\mu_3)$, where $\mu_1+\mu_2+\mu_3=1,\,\mu_i>0$.
\begin{equation}
\tensor{P}= \left[
\begin{array}{lll}
     1-a_2-a_3&a_2 &a_3\\
     b_1&1-b_1-b_3&b_3\\
     c_1&c_2 &1-c_1-c_2
\end{array}
\right ]
\end{equation}
By the stationarity, it is clear that the probability flux is
\begin{equation}
  \mu_1a_2-\mu_2b_1=\mu_2 b_3 - \mu_3 c_2 = \mu_3 c_1 - \mu_1 a_3.
\end{equation}
Let $\nu=\mu_1a_2-\mu_2b_1$. Let
$\tensor{Q}=\tensor{P}-\tensor{I}$, where $\tensor{I}$ is the unit
matrix. Denote by $-\lambda_1,\,-\lambda_2$ the
nonzero eigenvalues of $\tensor{Q}$.
\begin{lem}\label{sec4.1}
If $\triangle \neq 0$, then for $n\in \Nnum $, the $n$-step
transition probability matrix is
\begin{equation}
  \tensor{P}^n=g_n\tensor{L}+d_n\tensor{Q}+f_n\tensor{I},
\end{equation}
where
\begin{equation}
\begin{array}{ll}
d_n  = \frac{(1-\lambda_2)^n-(1-\lambda_1)^n}{\lambda_1-\lambda_2},\\
f_n =\frac
{\lambda_1(1-\lambda_2)^n-\lambda_2(1-\lambda_1)^n}{\lambda_1-\lambda_2},\\
g_n  = 1-f_n.
\end{array}
\end{equation}
\end{lem}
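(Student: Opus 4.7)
The plan is to mimic the proof of Lemma~\ref{probability} by computing a spectral decomposition of $\tensor{P}$. Since $\tensor{Q}=\tensor{P}-\tensor{I}$ has eigenvalues $0,-\lambda_1,-\lambda_2$, the matrix $\tensor{P}$ has eigenvalues $1,\,1-\lambda_1,\,1-\lambda_2$; the hypothesis $\triangle\neq 0$ ensures $\lambda_1\neq\lambda_2$, hence $\tensor{P}$ is diagonalizable with simple spectrum. Because $\tensor{P}\vec{e}=\vec{e}$ and $\mu\tensor{P}=\mu$ with $\mu\vec{e}=1$, the rank-one matrix $\tensor{L}=\vec{e}\mu$ is the spectral projector onto the eigenvalue $1$; in particular $\tensor{L}^2=\tensor{L}$ and $\tensor{L}\tensor{Q}=\tensor{Q}\tensor{L}=0$.

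First I would write $\tensor{P}=\tensor{L}+(1-\lambda_1)\tensor{F}_1+(1-\lambda_2)\tensor{F}_2$, where $\tensor{F}_1,\tensor{F}_2$ are the remaining spectral projectors. Combined with $\tensor{L}+\tensor{F}_1+\tensor{F}_2=\tensor{I}$, this yields the linear system
\begin{equation*}
\tensor{F}_1+\tensor{F}_2=\tensor{I}-\tensor{L},\qquad -\lambda_1\tensor{F}_1-\lambda_2\tensor{F}_2=\tensor{Q},
\end{equation*}
whose solution expresses $\tensor{F}_1,\tensor{F}_2$ as explicit linear combinations of $\tensor{I}-\tensor{L}$ and $\tensor{Q}$. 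Substituting into $\tensor{P}^n=\tensor{L}+(1-\lambda_1)^n\tensor{F}_1+(1-\lambda_2)^n\tensor{F}_2$ and regrouping, the coefficient of $\tensor{Q}$ becomes $\frac{(1-\lambda_2)^n-(1-\lambda_1)^n}{\lambda_1-\lambda_2}=d_n$, the coefficient of $\tensor{I}-\tensor{L}$ becomes $f_n$, and hence the coefficient of $\tensor{L}$ is $1-f_n=g_n$, giving the claimed formula.

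An alternative route avoids spectral projectors and proceeds by induction on $n$. The base case $n=1$ reduces to $\tensor{P}=\tensor{Q}+\tensor{I}$, which matches $(g_1,d_1,f_1)=(0,1,1)$. For the inductive step, multiply $g_n\tensor{L}+d_n\tensor{Q}+f_n\tensor{I}$ by $\tensor{Q}+\tensor{I}$, simplify using $\tensor{L}\tensor{Q}=0$ and $\tensor{L}^2=\tensor{L}$, and reduce $\tensor{Q}^2$ by the Cayley--Hamilton-type identity
\begin{equation*}
\tensor{Q}^2=\lambda_1\lambda_2\tensor{L}-(\lambda_1+\lambda_2)\tensor{Q}-\lambda_1\lambda_2\tensor{I},
\end{equation*}
which follows from $\tensor{Q}(\tensor{Q}+\lambda_1\tensor{I})(\tensor{Q}+\lambda_2\tensor{I})=0$ together with $\mu\tensor{Q}=0$ (the latter fixes the $\tensor{L}$-coefficient after dividing by $\tensor{Q}$). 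The three scalar recurrences thereby produced for $g_n,d_n,f_n$ are then checked directly against the claimed closed forms.

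The main obstacle is purely bookkeeping: verifying that, after regrouping, the coefficient of $\tensor{L}$ collapses to exactly $1-f_n$ and not a more involved expression in $\lambda_1,\lambda_2$. This is the same cancellation performed in the continuous-time Lemma~\ref{probability}, with $e^{-\lambda_i t}$ replaced by $(1-\lambda_i)^n$; all remaining manipulations, including the treatment of the degenerate case $\triangle=0$ via the limiting argument of Remark~\ref{remark}, carry over verbatim.
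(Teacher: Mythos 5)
Your proof is correct, but note that the paper itself gives no proof of Lemma~\ref{sec4.1}: it is explicitly omitted, and even the continuous-time counterpart (Lemma~\ref{probability}) is only cited from Proposition~4.3 of \cite{cy} and Theorem~14.9 of \cite{CCL}. So your argument is not a variant of the paper's proof but a self-contained replacement for it. Both of your routes check out. The spectral one is clean: irreducibility makes $1$ a simple eigenvalue of $\tensor{P}$, the hypothesis $\triangle\neq 0$ together with $\lambda_i\neq 0$ makes the spectrum $\{1,1-\lambda_1,1-\lambda_2\}$ simple, $\tensor{L}=\vec{e}\mu$ is indeed the eigenprojector for $1$ (since $\tensor{P}\vec{e}=\vec{e}$, $\mu\tensor{P}=\mu$, $\mu\vec{e}=1$), and solving your $2\times 2$ linear system for $\tensor{F}_1,\tensor{F}_2$ and regrouping reproduces exactly $d_n$, $f_n$ and $g_n=1-f_n$ (I verified the coefficient bookkeeping). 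The inductive route also works: the recurrences $d_{n+1}=f_n+(1-\lambda_1-\lambda_2)d_n$, $f_{n+1}=f_n-\lambda_1\lambda_2 d_n$, $g_{n+1}=g_n+\lambda_1\lambda_2 d_n$ are satisfied by the closed forms. The only imprecision is your justification of the identity $\tensor{Q}^2=\lambda_1\lambda_2\tensor{L}-(\lambda_1+\lambda_2)\tensor{Q}-\lambda_1\lambda_2\tensor{I}$ by ``dividing by $\tensor{Q}$'': $\tensor{Q}$ is singular, so this phrase should be replaced by the observation that $(\tensor{Q}+\lambda_1\tensor{I})(\tensor{Q}+\lambda_2\tensor{I})$ annihilates the spectral subspaces for $-\lambda_1,-\lambda_2$ and acts as $\lambda_1\lambda_2$ on the span of $\vec{e}$, hence equals $\lambda_1\lambda_2\tensor{L}$; expanding gives the identity. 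With that wording fixed, either of your arguments is a complete proof that the paper chose not to supply.
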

\begin{prop}\label{sec4.2}
The flux of the 2-dimension likelihood function is when
  $n\in\Nnum$,
\begin{equation*}
\hspace{-12mm} \Pr\set{S_0=i,S_n=j}-\Pr\set{S_0=j,S_n=i}=\frac{\nu A}{\lambda_1-\lambda_2}[(1-\lambda_2)^n-(1-\lambda_1)^n],
\end{equation*}
where $\nu$ is the probability flux,
$A=(y_2-x_2)(x_1-z_1)-(x_2-z_2)(y_1-x_1)$,
$(x_1,y_1,z_1)'=\varphi_i$, and $ (x_2,y_2,z_2)'=\varphi_j$.
\end{prop}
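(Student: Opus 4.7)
The plan is to mirror the proof of Proposition~\ref{prop1} line by line, using Lemma~\ref{sec4.1} in place of Lemma~\ref{probability}. The discrete-time likelihood admits the matrix representation
\[
\Pr\set{S_0=i,S_n=j}=\mu\tensor{\wedge}_i\tensor{P}^n\tensor{\wedge}_j\vec{e},
\]
so after subtracting the reversed expression, the difference splits according to the three terms in the decomposition $\tensor{P}^n=g_n\tensor{L}+d_n\tensor{Q}+f_n\tensor{I}$.

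First I would dispose of the $g_n$ and $f_n$ terms. Since $\tensor{L}=\vec{e}\mu$ factors through the scalar $\mu\tensor{\wedge}_i\vec{e}$, the identity $\mu\tensor{\wedge}_i\tensor{L}\tensor{\wedge}_j\vec{e}=\mu\tensor{\wedge}_j\tensor{L}\tensor{\wedge}_i\vec{e}$ from the proof of Proposition~\ref{prop1} carries over verbatim, and the $\tensor{I}$ contribution vanishes because $\tensor{\wedge}_i\tensor{\wedge}_j=\tensor{\wedge}_j\tensor{\wedge}_i$. Hence only the $d_n\tensor{Q}$ term survives, producing
\[
d_n\bigl(\mu\tensor{\wedge}_i\tensor{Q}\tensor{\wedge}_j\vec{e}-\mu\tensor{\wedge}_j\tensor{Q}\tensor{\wedge}_i\vec{e}\bigr)=d_n\,\varphi_i'(\tensor{U}\tensor{Q}-\tensor{Q}'\tensor{U})\varphi_j.
\]

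Next I would verify that the antisymmetric identity (\ref{matrix1}) still holds with the discrete-time $\tensor{Q}=\tensor{P}-\tensor{I}$. The identity term cancels ($\tensor{U}\tensor{I}-\tensor{I}\tensor{U}=0$), so $\tensor{U}\tensor{Q}-\tensor{Q}'\tensor{U}=\tensor{U}\tensor{P}-\tensor{P}'\tensor{U}$, and the stationarity relations $\mu_1 a_2-\mu_2 b_1=\mu_2 b_3-\mu_3 c_2=\mu_3 c_1-\mu_1 a_3=\nu$ produce exactly the same antisymmetric matrix with entries $\nu,-\nu,\nu$ off the diagonal. Consequently $\varphi_i'(\tensor{U}\tensor{Q}-\tensor{Q}'\tensor{U})\varphi_j=\nu A$ with the same $A$ as in Proposition~\ref{prop1}.

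Finally I would substitute $d_n=[(1-\lambda_2)^n-(1-\lambda_1)^n]/(\lambda_1-\lambda_2)$ from Lemma~\ref{sec4.1} to obtain the stated formula. The only non-routine point worth double-checking is the algebraic identity $\tensor{U}\tensor{Q}-\tensor{Q}'\tensor{U}=\nu\bigl[\begin{smallmatrix}0&1&-1\\-1&0&1\\1&-1&0\end{smallmatrix}\bigr]$ in the discrete-time setting; once that is confirmed, no new ideas are required and the computation reduces to the continuous-time template.
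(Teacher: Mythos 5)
Your proof is correct and is exactly the argument the paper intends: the paper explicitly omits the proof of Proposition~\ref{sec4.2}, stating that the method is the same as in the continuous-time case, and your line-by-line adaptation of the proof of Proposition~\ref{prop1} (including the check that $\tensor{U}\tensor{Q}-\tensor{Q}'\tensor{U}=\tensor{U}\tensor{P}-\tensor{P}'\tensor{U}$ still equals the antisymmetric matrix $\nu\bigl[\begin{smallmatrix}0&1&-1\\-1&0&1\\1&-1&0\end{smallmatrix}\bigr]$ by stationarity) is precisely what is needed. No gaps.
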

\begin{thm}\label{sec4.3}
The flux of the following 3-dimension likelihood function is when
  $n,m\in\Nnum$,
\begin{eqnarray*}
 & \Pr\set{S_0=S_n=S_{n+m}=i}- \Pr\set{S_0=S_m=S_{m+n}=i}\\
 &=\frac{\nu D}{\lambda_1-\lambda_2}[(1-\lambda _2)^n(1-\lambda
 _1)^m
  -(1-\lambda _1)^n(1-\lambda _2)^m],
\end{eqnarray*}
where $\nu$ is the probability flux, $D=(x-y)(y-z)(z-x)$,
$(x,y,z)'=\varphi_i$.
\end{thm}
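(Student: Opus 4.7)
The plan is to mirror, line for line, the proof of Theorem~\ref{3dim likely} in the continuous-time setting, using Lemma~\ref{sec4.1} in place of Lemma~\ref{probability}. The structural reason the same argument works is that in the discrete-time setting one defines $\tensor{Q}=\tensor{P}-\tensor{I}$, whose off-diagonal entries coincide with those of $\tensor{P}$. Stationarity then gives $\mu\tensor{Q}=0$ and the identical flux relation $\mu_1a_2-\mu_2b_1=\mu_2b_3-\mu_3c_2=\mu_3c_1-\mu_1a_3=\nu$, so the key matrix identity
\[
 \tensor{U Q}-\tensor{Q}'\tensor{U}=\nu\left[\begin{array}{rrr} 0&1&-1\\ -1&0&1\\ 1&-1&0\end{array}\right]
\]
carries over verbatim from Eq.~(\ref{matrix1}).

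First I would write the 3-dimensional likelihoods as matrix products,
\[
 \Pr\set{S_0=S_n=S_{n+m}=i}=\mu\tensor{\wedge}_i\tensor{P}^n\tensor{\wedge}_i\tensor{P}^m\tensor{\wedge}_i\vec{e},
\]
and the time-reversed version analogously. Then I would substitute the expansion $\tensor{P}^n=g_n\tensor{L}+d_n\tensor{Q}+f_n\tensor{I}$ (and likewise for $\tensor{P}^m$) from Lemma~\ref{sec4.1} into both products and subtract. Expanding gives nine scalar combinations of $(g_n,d_n,f_n)$ with $(g_m,d_m,f_m)$, each multiplying a matrix-sandwich expression.

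The main cleanup step is to eliminate most of those nine terms using the same commutation identities used in the continuous-time proof: since $\tensor{L}=\vec{e}\mu$ the scalar $\mu\tensor{\wedge}_i\tensor{L}\tensor{\wedge}_i\tensor{Q}\tensor{\wedge}_i\vec{e}$ factors as a product of two numbers and coincides with $\mu\tensor{\wedge}_i\tensor{Q}\tensor{\wedge}_i\tensor{L}\tensor{\wedge}_i\vec{e}$; similarly the $(\tensor{L},\tensor{I})$ pairing gives coincident scalars by the same factorization, and the $(\tensor{I},\tensor{I})$ pair is trivially symmetric. After cancellation only the $(\tensor{Q},\tensor{I})$ contribution survives, giving
\[
 (d_nf_m-d_mf_n)\bigl(\mu\tensor{\wedge}_i\tensor{Q}\tensor{\wedge}_i^2\vec{e}-\mu\tensor{\wedge}_i^2\tensor{Q}\tensor{\wedge}_i\vec{e}\bigr).
\]

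Finally I would evaluate each factor. From Lemma~\ref{sec4.1} a short calculation yields
\[
 d_nf_m-d_mf_n=\frac{1}{\lambda_1-\lambda_2}\bigl[(1-\lambda_2)^n(1-\lambda_1)^m-(1-\lambda_1)^n(1-\lambda_2)^m\bigr],
\]
and, setting $\psi=(x^2,y^2,z^2)'$, the matrix identity above gives $\mu\tensor{\wedge}_i\tensor{Q}\tensor{\wedge}_i^2\vec{e}-\mu\tensor{\wedge}_i^2\tensor{Q}\tensor{\wedge}_i\vec{e}=\varphi_i'(\tensor{U Q}-\tensor{Q}'\tensor{U})\psi=\nu D$ exactly as in Eq.~(\ref{matrix00}). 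Multiplying the two factors produces the claimed formula. The only real obstacle is bookkeeping in the nine-term expansion; the cancellations are identical to the continuous-time case, so no new ideas are needed.
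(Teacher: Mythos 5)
Your proposal is correct and follows exactly the route the paper intends: the paper omits this proof precisely because it is the verbatim analogue of the proof of Theorem~\ref{3dim likely}, with Lemma~\ref{sec4.1} replacing Lemma~\ref{probability} and with $\tensor{Q}=\tensor{P}-\tensor{I}$ preserving the identity $\tensor{UQ}-\tensor{Q}'\tensor{U}=\nu\bigl[\begin{smallmatrix}0&1&-1\\-1&0&1\\1&-1&0\end{smallmatrix}\bigr]$. Your computation of $d_nf_m-d_mf_n$ and the reduction to the single surviving $(\tensor{Q},\tensor{I})$ term both check out.
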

\begin{thm}\label{dingl2}
The three-state hidden Makov models is irreversible, if and only
if the underlying Markov chain is irreversible, the state-dependent probability matrix
is regular, and one of the
following conditions is satisfied:
\begin{itemize}
    \item[1)] the rank of $\tensor{\Pi}$ is $3$,
    \item[2)] the rank of $\tensor{\Pi}$ is $2$, and $0$ is not the
eigenvalue of $\tensor{P}$.
\end{itemize}
\end{thm}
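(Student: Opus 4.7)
I prove the biconditional by showing each direction separately, splitting into cases according to $\mathrm{rank}\,\tensor{\Pi}$ and the regularity of $\tensor{\Pi}$, and relying on Proposition~\ref{sec4.2}, Theorem~\ref{sec4.3}, and the discrete-time analogues of Theorem~\ref{kaishi} and Proposition~\ref{prop3}. The only subcase not essentially covered by the continuous-time Theorem~\ref{dingl} is $\mathrm{rank}\,\tensor{\Pi}=2$ together with $0\in\mathrm{spec}(\tensor{P})$; this is where the work concentrates and what I expect to be the main obstacle.

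\textbf{Sufficiency.} Suppose the chain is irreversible ($\nu\neq 0$), $\tensor{\Pi}$ is regular, and one of the two listed rank conditions holds. When $\mathrm{rank}\,\tensor{\Pi}=3$, the basis-expansion argument of Proposition~\ref{prop2} goes through with $\tensor{P}^n$ replacing $\tensor{P}(t)$, so reversibility would force $\mu_1\tensor{P}^n_{12}-\mu_2\tensor{P}^n_{21}=0$ for every $n\geqslant 1$; taking $n=1$ yields $\nu=0$, a contradiction. When $\mathrm{rank}\,\tensor{\Pi}=2$ and $0\notin\mathrm{spec}(\tensor{P})$, Lemma~\ref{cite3} together with regularity forces $\varphi_1=(x,y,z)'$ to have pairwise distinct entries (so $D\neq 0$), while $1-\lambda_1,1-\lambda_2\neq 0$. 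Plugging $m=1,n=2$ into Theorem~\ref{sec4.3} evaluates the bracket to $(1-\lambda_1)(1-\lambda_2)(\lambda_1-\lambda_2)\neq 0$, giving a nonvanishing 3-dimensional flux.

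\textbf{Necessity, easy cases.} Reversibility of the underlying chain forces reversibility of the observed process by the discrete-time analog of Theorem~\ref{kaishi}, whose proof transfers unchanged. Singularity of $\tensor{\Pi}$ (two equal rows) forces reversibility by the projector argument of Proposition~\ref{prop3}, carried over verbatim once $\tensor{P}(t)$ is replaced by $\tensor{P}^n$ and $\tensor{P}^n\tensor{P}^m=\tensor{P}^{n+m}$ replaces the continuous semigroup identity.

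\textbf{Necessity, hard case.} Assume $\mathrm{rank}\,\tensor{\Pi}=2$, $\tensor{\Pi}$ regular, and (without loss of generality) $\lambda_1=1$. Direct substitution in Lemma~\ref{sec4.1} yields $d_n=f_n=(1-\lambda_2)^{n-1}$ for $n\geqslant 1$, hence
\begin{equation*}
\tensor{P}^n=g_n\tensor{L}+d_n\tensor{P},\qquad n\geqslant 1,
\end{equation*}
so every positive power of $\tensor{P}$ lies in $\mathrm{span}\{\tensor{L},\tensor{P}\}$. Writing $\tensor{\wedge}_{s_k}=x_kA+y_k\tensor{I}$ with $A=\diag\{\varphi_1\}$ (Lemma~\ref{cite3}), I expand the general likelihood $\mu\tensor{\wedge}_{s_1}\tensor{P}^{\tau_2}\tensor{\wedge}_{s_2}\cdots\tensor{P}^{\tau_n}\tensor{\wedge}_{s_n}\vec{e}$ by summing over subsets $I\subseteq\{1,\ldots,n\}$ recording which $\tensor{\wedge}_{s_k}$ contributes its $A$-summand. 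Matching forward and reversed terms by the corresponding symbol-subset, the coefficient $\prod_{k\in I}x_{s_k}\prod_{k\notin I}y_{s_k}$ is shared; using $\mu\tensor{P}=\mu$ and $\tensor{P}\vec{e}=\vec{e}$ to absorb the identity factors, each $I$-contribution reduces to
\begin{equation*}
\mathcal{F}_I(S_1,\ldots,S_{|I|-1}):=\mu A\tensor{P}^{S_1}A\tensor{P}^{S_2}\cdots A\tensor{P}^{S_{|I|-1}}A\vec{e},
\end{equation*}
where each $S_j$ is a sum of consecutive $\tau_k$'s; under time reversal the tuple $(S_1,\ldots,S_{|I|-1})$ is merely reversed. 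The flux therefore collapses to $\sum_I c_I\bigl[\mathcal{F}_I(S_1,\ldots,S_{|I|-1})-\mathcal{F}_I(S_{|I|-1},\ldots,S_1)\bigr]$, and the remaining point is that each $\mathcal{F}_I$ is symmetric in its arguments. Substituting $\tensor{P}^{S_j}=g_{S_j}\tensor{L}+d_{S_j}\tensor{P}$ and using that $\tensor{L}=\vec{e}\mu$ is rank one, every resulting matrix monomial factorises as a product of scalar blocks $G_k=\mu A(\tensor{P}A)^{k-1}\vec{e}$ whose sizes form a tuple that is reversed when the $S_j$'s are reversed; since scalar multiplication is commutative, each monomial equals its reversed partner, $\mathcal{F}_I$ is symmetric, and the flux vanishes. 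The decisive ingredient is the containment $\tensor{P}^n\in\mathrm{span}\{\tensor{L},\tensor{P}\}$, which is equivalent to $0\in\mathrm{spec}(\tensor{P})$ and has no continuous-time analog, exactly accounting for the discrepancy between Theorem~\ref{dingl} and Theorem~\ref{dingl2}.
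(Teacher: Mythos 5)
Your proposal is correct and follows essentially the same route as the paper: the easy cases are delegated to the discrete-time analogues of Theorem~\ref{kaishi}, Proposition~\ref{prop2}, Proposition~\ref{prop11} and Proposition~\ref{prop3} (packaged in the paper as Proposition~\ref{mainthm2}), and the genuinely new case --- $\mathrm{rank}\,\tensor{\Pi}=2$ with $0$ an eigenvalue of $\tensor{P}$ --- is settled exactly as in the paper's Lemma~\ref{if and only if} and Proposition~\ref{propzero}, via $\tensor{P}^n=d_n\tensor{P}+g_n\tensor{L}$ and the factorisation of every $\tensor{L}$-containing monomial into commuting scalar blocks through the rank-one matrix $\tensor{L}=\vec{e}\mu$. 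The only cosmetic difference is that you fold the reduction of Lemma~\ref{if and only if} directly into the subset expansion rather than stating it separately.
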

Proofs of Lemma~\ref{sec4.1}, Proposition~\ref{sec4.2} and Theorem~\ref{sec4.3} are omitted.
Proof of Theorem~\ref{dingl2} is presented in Subsection~\ref{proof2}.
\subsection{Proofs}\label{proof2}
\begin{lem}\label{if and only if}
If the rank of $\tensor{\Pi}$ is $2$, the observed process is
reversible if and only if for all $n,m,\dots,k\in\Nnum$,
\begin{equation}\label{if and only}
 \mu\tensor{\wedge}\tensor{P}^n\tensor{\wedge}\tensor{P}^m\tensor{\wedge}\cdots\tensor{\wedge}\tensor{P}^k\tensor{\wedge}\vec{e}
 -\mu\tensor{\wedge}\tensor{P}^k\tensor{\wedge}\cdots\tensor{\wedge}\tensor{P}^m\tensor{\wedge}\tensor{P}^n\tensor{\wedge}\vec{e}=0,
\end{equation}
where $\wedge=\diag\set{\varphi_1}$.
\end{lem}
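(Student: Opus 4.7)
The plan is to treat the two directions separately. The ``only if'' (reversibility $\Rightarrow$ Eq.~(\ref{if and only})) is essentially a specialization, while the ``if'' direction mirrors the expansion-and-pairing argument of Proposition~\ref{prop3}. For the ``only if'' direction, I specialize the flux of the $r$-dimensional likelihood to the single observed trajectory $s_1=s_2=\cdots=s_r=1$, where $1$ denotes the symbol whose column of $\tensor{\Pi}$ is $\varphi_1$. Then every $\tensor{\wedge}_{s_i}$ coincides with $\tensor{\wedge}=\diag\set{\varphi_1}$, so the flux reduces to
$$\mu\tensor{\wedge}\tensor{P}^{t_2}\tensor{\wedge}\tensor{P}^{t_3}\cdots\tensor{P}^{t_r}\tensor{\wedge}\vec{e}-\mu\tensor{\wedge}\tensor{P}^{t_r}\cdots\tensor{P}^{t_2}\tensor{\wedge}\vec{e},$$
which is precisely the left-hand side of Eq.~(\ref{if and only}) with $(n,m,\ldots,k)=(t_2,\ldots,t_r)$. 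Reversibility forces this quantity to vanish, and since $r$ and the positive-integer gaps $t_i$ are arbitrary, the identity follows for all $n,m,\ldots,k\in\Nnum$.

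For the ``if'' direction, I would invoke Lemma~\ref{cite3} to decompose each column as $\varphi_{s_k}=x_k\vec{e}+y_k\varphi_1$, so that $\tensor{\wedge}_{s_k}=x_k\tensor{I}+y_k\tensor{\wedge}$. Substituting into
$$\mu\tensor{\wedge}_{s_1}\tensor{P}^{t_2}\tensor{\wedge}_{s_2}\cdots\tensor{P}^{t_r}\tensor{\wedge}_{s_r}\vec{e}-\mu\tensor{\wedge}_{s_r}\tensor{P}^{t_r}\cdots\tensor{P}^{t_2}\tensor{\wedge}_{s_1}\vec{e}$$
and expanding produces, for each subset $J\subseteq\set{1,\ldots,r}$ of indices at which the $\tensor{\wedge}$-summand (rather than the $\tensor{I}$-summand) is chosen, a pair of terms sharing the common scalar $\prod_{k\notin J}x_k\prod_{k\in J}y_k$. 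This coefficient is preserved under reversal because the label set $J$ picks out the same factors $\tensor{\wedge}_{s_k}$ in both products; only the position that a given factor occupies in the matrix product gets reflected.

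Within each pair I simplify using $\mu\tensor{P}^n=\mu$, $\tensor{P}^n\vec{e}=\vec{e}$, and $\tensor{I}\tensor{P}^n=\tensor{P}^n$: chosen $\tensor{I}$-factors let adjacent $\tensor{P}$-powers merge by exponent addition, while any $\tensor{P}$-power remaining at the extremes is absorbed into $\mu$ or $\vec{e}$. After this collapse the forward member of the pair takes the shape $\mu\tensor{\wedge}\tensor{P}^{a_1}\tensor{\wedge}\tensor{P}^{a_2}\cdots\tensor{\wedge}\tensor{P}^{a_{|J|-1}}\tensor{\wedge}\vec{e}$, with the $a_j$'s the sums of consecutive gaps $t_i$ between adjacent elements of $J$; the reversed member has the same exponents in reverse order. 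When $|J|\leq 1$ the two members are literally equal and cancel, while for $|J|\geq 2$ the hypothesis Eq.~(\ref{if and only}), applied with $(n,m,\ldots,k)=(a_1,\ldots,a_{|J|-1})$, makes the pair vanish; summing over all $2^r$ subsets $J$ then gives reversibility. The main obstacle will be the combinatorial bookkeeping in this last step: one must verify in general that, for a fixed $J$, the collapsed forward and reversed products display the same list of positive-integer exponents in opposite orders. This reduces to tracking how $t_2,\ldots,t_r$ partition into $|J|-1$ ``runs'' separated by the positions in $J$, with reversal of the matrix product corresponding exactly to reversal of the run order.
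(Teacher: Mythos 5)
Your proposal is correct and follows essentially the same route as the paper: the necessity is obtained by specializing the flux to the constant trajectory on the symbol with column $\varphi_1$ (which the paper dismisses as trivial and records in the remark following the lemma), and the sufficiency uses the same decomposition $\tensor{\wedge}_{s_k}=x_k\tensor{I}+y_k\tensor{\wedge}$ from Lemma~\ref{cite3}, the same subset expansion with matching coefficients for the forward and reversed products, and the same collapse via $\mu\tensor{P}=\mu$, $\tensor{P}\vec{e}=\vec{e}$ and merging of powers before invoking Eq.~(\ref{if and only}). Your version merely spells out the combinatorial bookkeeping (the runs of gaps between consecutive elements of $J$ reversing order) that the paper leaves implicit.
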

\begin{proof}
The necessity is trivial. We need to prove the sufficiency only. It follows that $\varphi_{s_k}=x_k\vec{e}+y_{k}\varphi_1$
from Lemma~\ref{cite3}.
Then $\wedge_{s_k}=x_k\tensor{I}+y_{k}\tensor{\wedge}$. The flux
of the likelihood function is
\begin{eqnarray*}
& &\Pr(S_1=s_1,\,S_2=s_2,\,\cdots,\,S_l=s_l)-\Pr(S_l=s_1,\,S_{l-1}=s_2,\,\cdots,\,S_1=s_l)\\
&=&
\mu\wedge_{s_{_1}}\tensor{P}\wedge_{s_{_2}}\tensor{P}\cdots\wedge_{s_{_l}}\vec{e}
-\mu\wedge_{s_{_l}}\tensor{P}\wedge_{s_{_{l-1}}}\tensor{P}\cdots\wedge_{s_{_1}}\vec{e}\\
&=&\mu[x_{1}\tensor{I}+y_{1}\tensor{\wedge}]\tensor{P}[x_{2}\tensor{I}+y_{2}\tensor{\wedge}]
\tensor{P}\cdots[x_{l}\tensor{I}+y_{l}\tensor{\wedge}]\vec{e}\\
&-&\mu[x_{l}\tensor{I}+y_{l}\tensor{\wedge}]\tensor{P}[x_{l-1}\tensor{I}+y_{l-1}\tensor{\wedge}]
\tensor{P}\cdots[x_{1}\tensor{I}+y_{1}\tensor{\wedge}]\vec{e}\\
&=& \sum\limits_{\set{i_1,i_2,\cdots,i_s}} x_{i_1}\cdots
x_{i_s}y_{j_1}\cdots y_{j_k}\big[\mu\cdots
\tensor{PI}\cdots\tensor{P\wedge}\cdots\vec{e}-\mu\cdots
\tensor{\wedge P}\cdots\tensor{IP}\cdots\vec{e}\big].
\end{eqnarray*}
where $\set{i_1,i_2,\cdots,i_s}\in\set{1,2,\cdots,l}$ and
$\set{j_1,j_2,\cdots,j_k}=\set{1,2,\cdots,l}\setminus
\set{i_1,\cdots,i_s}$. Delete the term which does not contain
$\tensor{\wedge}$. Note that $\mu \tensor{P}=\mu$ and
$\tensor{P}\vec{e}=\vec{e}$.
\begin{eqnarray*}
&&\mu\cdots\tensor{PI}\cdots\tensor{P\wedge}\cdots\vec{e}-\mu\cdots
\tensor{\wedge P}\cdots\tensor{IP}\cdots\vec{e}\\
&=&\mu\tensor{\wedge}\tensor{P}^n\tensor{\wedge}\tensor{P}^m\tensor{\wedge}\cdots\tensor{\wedge}\tensor{P}^k\tensor{\wedge}\vec{e}
 -\mu\tensor{\wedge}\tensor{P}^k\tensor{\wedge}\cdots\tensor{\wedge}\tensor{P}^m\tensor{\wedge}\tensor{P}^n\tensor{\wedge}\vec{e}
\\
&=&0 .
\end{eqnarray*}
This ends the proof.
\end{proof}
\begin{rem}
Eq.(\ref{if and only}) is equivalent to for all positive integers
$r$ and all $0\leqslant t_1 \leqslant t_2 \leqslant
\cdots\leqslant t_r$,
\begin{equation}\label{equation 1}
 \Pr(S_{t_1}=S_{t_2}=\cdots=S_{t_r}=1)=\Pr(S_{t_1^-}=S_{t_2^-}=\cdots=S_{t_r^-}=1),
\end{equation}
where $t_l^-=t_1+t_r-t_l$.
\end{rem}
\begin{prop}\label{propzero}
If the rank of the state-dependent probability is $2$, and $0$ is
the eigenvalue of the 1-step transition probability, then the
observed process is reversible.
\end{prop}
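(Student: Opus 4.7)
The plan is to use the eigenvalue hypothesis to collapse $\tensor{P}^n$ to a particularly simple two-term form, then to reduce reversibility via Lemma~\ref{if and only if} to a word-level identity that is closed by an induction driven by the rank-one factorization of $\tensor{L}$.

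Since $0$ is an eigenvalue of $\tensor{P}=\tensor{Q}+\tensor{I}$, one of the two nonzero eigenvalues of $\tensor{Q}$ equals $-1$; without loss of generality take $\lambda_1=1$ and set $\rho := 1-\lambda_2$. Substituting $\lambda_1=1$ into Lemma~\ref{sec4.1} (using its limiting form when $\lambda_2=1$ as well) yields
\begin{equation*}
\tensor{P}^n \,=\, \tensor{L} + \rho^{n-1}\tensor{R},\qquad n \geq 1,
\end{equation*}
with $\tensor{R} := \tensor{P}-\tensor{L}$. Thus every positive power $\tensor{P}^n$ lives in the fixed two-dimensional span of $\tensor{L}$ and $\tensor{R}$, with $n$-dependence concentrated entirely in the scalar $\rho^{n-1}$.

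By Lemma~\ref{if and only if} it suffices to verify, for every $r$ and all $n_1,\ldots,n_r \geq 1$,
\begin{equation*}
\mu\tensor{\wedge}\tensor{P}^{n_1}\tensor{\wedge}\cdots\tensor{\wedge}\tensor{P}^{n_r}\tensor{\wedge}\vec{e}
\,=\, \mu\tensor{\wedge}\tensor{P}^{n_r}\tensor{\wedge}\cdots\tensor{\wedge}\tensor{P}^{n_1}\tensor{\wedge}\vec{e}.
\end{equation*}
Substituting the two-term expansion of each $\tensor{P}^{n_i}$ expands both sides into a sum of $2^r$ terms indexed by the subset $S \subseteq \{1,\ldots,r\}$ of slots at which $\tensor{R}$ is selected. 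The coefficient of each term is the monomial $\prod_{i \in S}\rho^{n_i-1}$, which is invariant under the natural reversal of positions that pairs a term in the forward expansion with a term in the reverse expansion. Matching coefficients therefore reduces the whole identity to the combinatorial statement
\begin{equation*}
\phi(Y_1,\ldots,Y_r) \,=\, \phi(Y_r,Y_{r-1},\ldots,Y_1),\qquad Y_i \in \{\tensor{L},\tensor{R}\},
\end{equation*}
where $\phi(Y_1,\ldots,Y_r) := \mu\tensor{\wedge}Y_1\tensor{\wedge}\cdots\tensor{\wedge}Y_r\tensor{\wedge}\vec{e}$.

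I would close the word identity by induction on $r$. The key structural observation---analogous to the role played by the idempotent $\tensor{E}_3$ in the proof of Proposition~\ref{prop3}---is that $\tensor{L}=\vec{e}\mu$ has rank one, so $\tensor{\wedge}\tensor{L}\tensor{\wedge} = (\tensor{\wedge}\vec{e})(\mu\tensor{\wedge})$. Whenever some $Y_j=\tensor{L}$, this rank-one factorization splits the matrix product into a product of two scalars,
\begin{equation*}
\phi(Y_1,\ldots,Y_r) \,=\, \phi(Y_1,\ldots,Y_{j-1}) \cdot \phi(Y_{j+1},\ldots,Y_r),
\end{equation*}
to each shorter factor of which the induction hypothesis applies; after using commutativity of scalars and re-gluing through an $\tensor{L}$ at the mirrored position $r-j+1$, one recovers $\phi(Y_r,\ldots,Y_1)$. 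The sole remaining case, all $Y_i=\tensor{R}$, is trivial because the reversed word equals the original symbol by symbol. The main obstacle I expect is bookkeeping---matching the subset index $S$ in the forward expansion with its position-reflected image in the reverse expansion and verifying that the common coefficient $\prod_{i\in S}\rho^{n_i-1}$ is indeed invariant under that reflection---but once this reduction is in place, the rank-one splitting of $\tensor{L}$ closes the induction in one line.
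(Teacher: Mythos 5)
Your proposal is correct and follows essentially the same route as the paper: the paper likewise uses the zero eigenvalue to collapse $\tensor{P}^n$ to a two-term combination of $\tensor{P}$ and $\tensor{L}$ (namely $\tensor{P}^n=d_n\tensor{P}+g_n\tensor{L}$, equivalent to your $\tensor{L}+\rho^{n-1}\tensor{R}$ up to a change of basis), reduces to the word identity via Lemma~\ref{if and only if}, discards the all-$\tensor{P}$ word as palindromic, and splits every word containing $\tensor{L}=\vec{e}\mu$ into a product of commuting scalars. Your explicit induction on the word length is just a tidier organization of the paper's direct factorization, and your coefficient-matching under position reflection is the bookkeeping the paper leaves implicit.
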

\begin{proof}
Without loss generality, let $1-\lambda_2=0$. By
Lemma~\ref{probability},
 for all $n\in \Nnum$,
$$\tensor{P}^n=d_n\tensor{P}+g_n\tensor{L},$$
where $g_n=1-d_n,\,d_n=(1-\lambda_1)^{n-1}$.

\begin{eqnarray*}
&&
\mu\tensor{\wedge}\tensor{P}^n\tensor{\wedge}\tensor{P}^m\tensor{\wedge}\cdots\tensor{\wedge}\tensor{P}^k\tensor{\wedge}\vec{e}
 -\mu\tensor{\wedge}\tensor{P}^k\tensor{\wedge}\cdots\tensor{\wedge}\tensor{P}^m\tensor{\wedge}\tensor{P}^n\tensor{\wedge}\vec{e}\\
 &=&\mu\tensor{\wedge}[d_n\tensor{P}+g_n\tensor{L}]\tensor{\wedge}[d_m\tensor{P}+g_m\tensor{L}]\tensor{\wedge}\cdots\tensor{\wedge}[d_k\tensor{P}+g_k\tensor{L}]\tensor{\wedge}\vec{e}\\
 &-&\mu\tensor{\wedge}[d_k\tensor{P}+g_k\tensor{L}]\tensor{\wedge}\cdots\tensor{\wedge}[d_m\tensor{P}+g_m\tensor{L}]\tensor{\wedge}[d_n\tensor{P}+g_n\tensor{L}]\tensor{\wedge}\vec{e}\\
 &=&\sum\limits_{\set{i_1,i_2,\cdots,i_s}}
d_{i_1}\cdots d_{i_s}g_{j_1}\cdots
g_{j_k}\big[\mu\tensor{\wedge}\cdots
\tensor{P\wedge}\cdots\tensor{L\wedge}\cdots\tensor{\wedge}\vec{e}-\mu\tensor{\wedge}\cdots
\tensor{\wedge L}\cdots\tensor{\wedge
P}\cdots\tensor{\wedge}\vec{e}\big].
        \end{eqnarray*}
Delete the term which does not contain $\tensor{L}$. Note that
$\tensor{L}=\vec{e}\mu$.
\begin{eqnarray*}
  && \mu\tensor{\wedge}\cdots
\tensor{P\wedge}\cdots\tensor{L\wedge}\cdots\tensor{\wedge}\vec{e}\\
&=& (\mu\tensor{\wedge P}\cdots \tensor{
P\wedge}\vec{e})(\mu\tensor{\wedge
P}\cdots\tensor{P\wedge}\vec{e})
\cdots(\mu\tensor{\wedge P}\cdots\tensor{P\wedge}\vec{e})\\
  &=&\mu\tensor{\wedge}\cdots
\tensor{\wedge L}\cdots\tensor{\wedge
P}\cdots\tensor{\wedge}\vec{e}.
     \end{eqnarray*}
This ends the proof by Lemma~\ref{if and only if}.
\end{proof}
\begin{prop}\label{mainthm2}
Suppose that the underlying Markov chain is irreversible. Then we have
\begin{itemize}
 \item[1)] if there are two equal rows of $\tensor{\Pi}$, then the
observed  process is reversible;
 \item[2)] if the rank of $\tensor{\Pi}$ is $2$,\\
a) and if $0$ is the eigenvalue of $\tensor{P}$, then the
observed  process is reversible;\\
b) $\tensor{\Pi}$ is regular, and if $0$ is not the eigenvalue of $\tensor{P}$,
then the observed process is
irreversible;
    \item[3)]if the rank of $\tensor{\Pi}$ is $3$, then the observed  process is
    irreversible.
\end{itemize}
\end{prop}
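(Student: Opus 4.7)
The plan is to dispatch the four clauses separately, invoking the discrete analogues of the continuous-time machinery already at hand: Proposition~\ref{propzero}, Theorem~\ref{sec4.3}, and the bookkeeping used in Propositions~\ref{prop2} and~\ref{prop3}.

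For clause~1), I would imitate the proof of Proposition~\ref{prop3}. Assume (after relabeling) that the first two rows of $\tensor{\Pi}$ coincide. Then every column has the shape $\varphi_{s_k} = x_k\vec{e} + y_k\vec{e}_3$ and hence $\wedge_{s_k} = x_k\tensor{I} + y_k\tensor{E}_3$. Expanding the $r$-dimensional likelihood flux in powers of $\tensor{E}_3$, the terms containing no $\tensor{E}_3$ factor are killed by $\mu\tensor{P}^n = \mu$ and $\tensor{P}^n\vec{e} = \vec{e}$. Every surviving term contains at least one $\tensor{E}_3$, and the identities $\tensor{E}_3\tensor{P}^n\tensor{E}_3 = (\tensor{P}^n)_{33}\tensor{E}_3$, together with the commutativity of the scalars $\mu_3$ and $(\tensor{P}^n)_{33}$, make the forward and reverse terms coincide. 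The rank-1 subcase (all three rows equal) yields an i.i.d.\ observed sequence and is trivial. For clause~3), I would copy the argument of Proposition~\ref{prop2}: with rank $3$, one writes $\tensor{E}_1$ and $\tensor{E}_2$ as linear combinations of the $\tensor{\wedge}_i$'s, so reversibility of the observed process forces $\mu_1 (\tensor{P}^n)_{12} = \mu_2 (\tensor{P}^n)_{21}$ for all $n \in \Nnum$; taking $n=1$ gives $\mu_1 a_2 = \mu_2 b_1$, i.e.\ $\nu = 0$, contradicting the assumed irreversibility of the hidden chain.

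Clause~2a) is exactly Proposition~\ref{propzero}. For clause~2b) I would invoke Theorem~\ref{sec4.3}. By Lemma~\ref{cite3}, every column of $\tensor{\Pi}$ is a linear combination of the distinguished column $\varphi_1$ and $\vec{e}$; if $\varphi_1 = (x,y,z)'$ had two equal entries, the corresponding two rows of $\tensor{\Pi}$ would coincide, contradicting regularity. Hence $x,y,z$ are pairwise distinct, $D = (x-y)(y-z)(z-x) \neq 0$, and $\nu \neq 0$ by hypothesis. Since $0$ is not an eigenvalue of $\tensor{P}$, both $1-\lambda_1$ and $1-\lambda_2$ are nonzero. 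Picking $(n,m)=(1,2)$, the bracket in Theorem~\ref{sec4.3} factors as $(1-\lambda_1)(1-\lambda_2)(\lambda_2-\lambda_1)$, nonzero whenever $\lambda_1 \neq \lambda_2$, so the $3$-dimensional flux does not vanish and the observed process is irreversible.

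The main obstacle I anticipate is the degenerate case $\lambda_1 = \lambda_2$ in clause~2b), where Theorem~\ref{sec4.3} formally reads $0/0$. By the limiting convention analogous to Remark~\ref{remark}, the bracket collapses to $(n-m)(1-\lambda_1)^{n+m-1}$, still nonzero for any $n\neq m$ provided $0$ is not an eigenvalue of $\tensor{P}$, so clause~2b) persists in this edge case as well.
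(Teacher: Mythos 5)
Your proposal is correct and follows essentially the same route as the paper, whose proof simply states that clause 1) is similar to Proposition~\ref{prop3}, clause 2) is Proposition~\ref{propzero} together with an argument similar to Proposition~\ref{prop11}, and clause 3) is similar to Proposition~\ref{prop2}. You supply more detail than the paper does (the explicit choice $(n,m)=(1,2)$ and the $\lambda_1=\lambda_2$ limiting case, which the paper leaves to the convention of Remark~\ref{remark}), but the underlying argument is the same.
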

\begin{proof}
The first case is similar to Proposition~\ref{prop3}. The second case is
Proposition~\ref{propzero} and similar to Proposition~\ref{prop11}. The third
case is similar to Proposition~\ref{prop2}.
\end{proof}

\noindent{\it Proof of Theorem \ref{dingl2}.\,}
It can be shown directly by Theorem~\ref{kaishi} and
Proposition~\ref{mainthm2}.
{\hfill\large{$\Box$}}
\section*{Acknowledgements}  This work is supported by Hunan Provincial Natural Science Foundation of China (No 10JJ6014).


\end{document}